\documentclass[a4paper, 11pt]{amsart}

\pdfoutput=1

\usepackage{amsfonts}
\usepackage{amssymb}
\usepackage{microtype}
\usepackage{hyperref} 

\newtheorem{theorem}{Theorem}[section]
\newtheorem{proposition}[theorem]{Proposition}
\newtheorem{lemma}[theorem]{Lemma}
\newtheorem{corollary}[theorem]{Corollary}
\theoremstyle{definition}
\newtheorem{definition}{Definition}[section]
\newtheorem{assumption}{Assumption}
\newtheorem{remark}[theorem]{Remark}

\numberwithin{equation}{section}

\newcommand{\naturals}{\mathbb{N}}

\newcommand{\reals}{\mathbb{R}}

\newcommand{\hcal}{\mathcal{H}}
\newcommand{\pcal}{\mathcal{P}}

\newcommand{\diff}{\mathop{}\!d}

\newcommand{\norm}[1]{\|#1\|}

\newcommand{\lipnorm}[1]{\norm{#1}_{\text{Lip}}}

\newcommand{\charfun}{\mathbf{1}}

\DeclareMathOperator{\inj}{inj}
\DeclareMathOperator{\diam}{diam}
\DeclareMathOperator{\supp}{supp}
\DeclareMathOperator{\dist}{dist}
\DeclareMathOperator{\vol}{Vol}

\newcommand{\leqs}{\lesssim}
\newcommand{\geqs}{\gtrsim}


\title[Borel--Cantelli Lemma for {Axiom A} Diffeomorphisms]
{
  A Recurrence-type Strong Borel--Cantelli Lemma for {Axiom A}
  Diffeomorphisms
}

\author{Alejandro Rodriguez Sponheimer}

\date{February 7, 2025}

\address{Centre for Mathematical Sciences,
Lund University, Box~118, 221~00 Lund, Sweden}

\email{alejandro.rodriguez\_sponheimer@math.lth.se}

\subjclass[2020]{37D20 (Primary); 37A05, 37B20 (Secondary)}

\keywords{Strong Borel--Cantelli lemma, Axiom~A diffeomorphisms,
recurrence}

\begin{document}

\begin{abstract}
  Let $(X,\mu,T,d)$ be a metric measure-preserving dynamical system
  such that $3$-fold correlations decay exponentially for Lipschitz
  continuous observables. Given a sequence $(M_k)$ that converges to
  $0$ slowly enough, we obtain a strong dynamical Borel--Cantelli
  result for recurrence, i.e., for $\mu$-a.e.\ $x\in X$
  \[
    \lim_{n \to \infty}\frac{\sum_{k=1}^{n} \charfun_{B_k(x)}(T^{k}x)}
    {\sum_{k=1}^{n} \mu(B_k(x))} = 1,
  \]
  where $\mu(B_k(x)) = M_k$. In particular, we show that this result
  holds for Axiom~A diffeomorphisms and equilibrium states under
  certain assumptions.
\end{abstract}

\maketitle

\section{Introduction}
The Poincar\'e recurrence theorem is a fundamental result in dynamical
systems. It states that if $(X, \mu, T)$ is a separable
measure-preserving dynamical system with a Borel probability measure
$\mu$, then $\mu$-a.e.\ point $x \in X$ is recurrent, i.e., there
exists an increasing sequence $n_i$ such that $T^{n_i}x \to x$.
Equipping the system with a metric $d$ such that $(X,d)$ is separable
and $d$-open sets are measurable, we may restate the result as
$\liminf_{n \to \infty} d(T^{n}x,x) = 0$.
A natural question to ask is if anything can be said about the rate of 
convergence. In \cite{boshernitzan1993Quantitative}, Boshernitzan
proved that if the Hausdorff measure $\hcal_\alpha$ is $\sigma$-finite
on $X$ for some 
$\alpha > 0$, then for $\mu$-a.e.\ $x \in X$
\[
  \liminf_{n \to \infty} n^{1/\alpha} d(x,T^{n}x) < \infty.
\]
Moreover, if $H_{\alpha}(X) = 0$, then for $\mu$-a.e.\ $x\in X$ 
\[
  \liminf_{n \to \infty} n^{1/\alpha} d(x,T^{n}x) = 0.
\]

We may state Boshernitzan's result in a different way: for
$\mu$-a.e.\ $x\in X$ there exists a constant $c > 0$ such that
\[
  \sum_{k=1}^{\infty} \charfun_{B(x,ck^{-1/\alpha})}(T^{k}x) 
  = \infty,
\]
where $B(x,r)$ denotes the open ball with centre $x$ and radius $r$.
Such a sum resembles those that appear in dynamical Borel--Cantelli
lemmas
for shrinking targets, $B_k = B(y_k,r_k)$, where the centres $y_k$ do
not depend on $x$. A dynamical Borel--Cantelli lemma is a zero-one law
that gives conditions on the dynamical system and a sequence of sets
$A_k$ such that
$\sum_{k=1}^{\infty} \charfun_{A_k}(T^{k}x)$ converges or diverges for
$\mu$-a.e.\ $x$, depending on the convergence or divergence of
$\sum_{k=1}^{\infty} \mu(A_k)$.
In some cases it is possible to prove the stronger result,
namely that if $\sum_{k=1}^{\infty} \mu(A_k) = \infty$, then
$\sum_{k=1}^{\infty} \charfun_{A_k}(T^{k}x) 
\sim \sum_{k=1}^{\infty} \mu(A_k)$.
Such results are called strong dynamical Borel--Cantelli lemmas.

Although the first dynamical Borel--Cantelli lemma for shrinking
targets was proved in \textup{1967} by Philipp
\cite{philipp1967Metrical}, recurrence versions have only recently been
studied. The added difficulty arises because of the dependence on $x$
that the sets $B_{k}(x)$ have.
Persson proved in \cite{persson2023Strong} that one can obtain a
recurrence version of a strong dynamical Borel--Cantelli lemma for a
class of mixing dynamical systems on the unit interval.
Other recent recurrence results include
Hussain, Li, Simmons and Wang~\cite{hussain2022Dynamical},
who obtained a zero-one law under the assumptions of mixing conformal
maps and Ahlfors regular measures. Under similar assumptions,
Kleinbock and Zheng~\cite{kleinbock2023Dynamical}
prove a zero-one law under Lipschitz twists, which combines recurrence
and shrinking targets results.
Both these papers put strong assumptions on the underlying measure of
the dynamical system, assuming, for example, Ahlfors regularity.
For other recent improvements of Boshernitzan's result, see
\S\ref{final:background}.

The main result of the paper, Theorem~\ref{thm:general},
is a strong dynamical Borel--Cantelli result for recurrence for
dynamical systems satisfying some general conditions. The theorem
extends the main theorem of Persson \cite{persson2023Strong}, which is
stated in Theorem~\ref{thm:persson}, concerning a class of mixing
dynamical systems on the unit interval to a more general class, that
includes, for example, some mixing systems on compact smooth manifolds.
Since we consider shrinking balls $B_k(x)$ for which the radii converge
to $0$ at various rates, Theorem~\ref{thm:general} is an improvement of
Boshernitzan's result for the systems considered.
For now we state the following result, which is the main application of
Theorem~\ref{thm:general}.
Equip a smooth manifold with the induced metric.

\begin{theorem}\label{thm:main} 
  Consider a compact smooth $N$-dimensional manifold $M$. Suppose
  $f \colon M \to M$ is an Axiom~A diffeomorphism that is topologically
  mixing on a basic set and $\mu$ is an equilibrium state corresponding
  to a H\"older continuous potential on the basic set. Suppose that
  there exist $c_1>0$ and $s > N - 1$ satisfying
  \[
    \mu(B(x,r)) \leq c_1r^{s}
  \]
  for all $x \in M$ and $r \geq 0$. Assume further that $(M_n)$ is a
  sequence converging to $0$ satisfying
  \[
    M_n \geq \frac{(\log n)^{4+\varepsilon}}{n}
  \]
  for some $\varepsilon > 0$ and
  \[
    \lim_{\alpha \to 1^{+}}\limsup_{n\to \infty}
      \frac{M_n}{M_{\lfloor \alpha n\rfloor}}
    = 1.
  \]
  Define $B_n(x)$ to be the ball at $x$ with $\mu(B_n(x)) = M_n$. Then
  \[
    \lim_{n \to \infty}\frac{\sum^{n}_{k=1} \charfun_{B_k(x)}(f^{k}x)}
    {\sum^{n}_{k=1} \mu(B_k(x))} = 1
  \]
  for $\mu$-a.e.\ $x \in M$.
\end{theorem}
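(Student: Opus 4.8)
The plan is to obtain Theorem~\ref{thm:main} as a direct consequence of the general Theorem~\ref{thm:general}: one checks that $(M,\mu,f,d)$, with $d$ the Riemannian distance, is a metric measure-preserving system to which Theorem~\ref{thm:general} applies, and that the sequence $(M_n)$ meets its hypotheses. The conditions on $(M_n)$ are built into the statement — the growth bound $M_n\geq(\log n)^{4+\varepsilon}/n$ and the slow-decay condition $\lim_{\alpha\to1^{+}}\limsup_{n}M_n/M_{\lfloor\alpha n\rfloor}=1$ are exactly what is required, and the upper regularity $\mu(B(x,r))\leq c_1r^{s}$ is assumed outright. So the substance of the proof is to verify the two structural hypotheses: (i) exponential decay of $3$-fold correlations for H\"older observables, and (ii) the geometric hypothesis that lets one sandwich indicators of metric balls between H\"older functions with small error and only polynomially large H\"older norms — and it is in (ii) that the restriction $s>N-1$ is used. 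The reason $3$-fold (rather than merely pairwise) correlations are needed is that the Borel--Cantelli summands $\charfun_{B_k(x)}(f^{k}x)$ depend on $x$ through two slots, so that when one expands a product $\charfun_{B_k(x)}(f^{k}x)\charfun_{B_\ell(x)}(f^{\ell}x)$ by approximating each factor by a finite sum $\sum_i g_i(x)h_i(f^{\cdot}x)$ and integrates, one is left with correlations at the three times $0,k,\ell$.

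For (i) I would invoke Bowen's symbolic description of Axiom A basic sets. Restricting to the basic set $\Lambda$ carrying $\mu$ and fixing a Markov partition, $(\Lambda,f,\mu)$ is a finite-to-one factor, via a H\"older coding map $\pi\colon\Sigma_A\to\Lambda$, of a subshift of finite type $(\Sigma_A,\sigma)$ equipped with the Gibbs measure $\hat\mu$ of the (H\"older) pulled-back potential; mixing of the equilibrium state translates into $\sigma$ being mixing, possibly after passing to the spectral decomposition. The rectangle boundaries and the non-injectivity locus of $\pi$ are $\mu$-null, so correlation integrals on $M$ equal the corresponding integrals on $\Sigma_A$, and a H\"older observable on $M$ pulls back to a H\"older observable on $\Sigma_A$ because $\pi$ is H\"older (a consequence of uniform hyperbolicity). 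One then applies the transfer-operator machinery: reduce the two-sided shift to a one-sided shift by the standard cohomology argument, use the spectral gap of the Ruelle--Perron--Frobenius operator on the space of H\"older functions to get exponential decay of pairwise correlations, and iterate this estimate to obtain the $3$-fold version, i.e.\ $|\int\varphi_0\,(\varphi_1\circ\sigma^{m})\,(\varphi_2\circ\sigma^{m+n})\diff\hat\mu-\int\varphi_0\diff\hat\mu\int\varphi_1\diff\hat\mu\int\varphi_2\diff\hat\mu|$ (together with its partially grouped variants) is $O(\theta^{\min(m,n)})$ for H\"older $\varphi_i$. Here I would cite the classical sources (Bowen's lecture notes, Ruelle, Parry--Pollicott) together with a reference stating the multiple-correlation estimate in the precise form demanded by Theorem~\ref{thm:general}.

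For (ii) the task is, given $x\in M$ and $r>0$, to produce H\"older functions $\varphi_r^{\pm}$ with $\varphi_r^{-}\leq\charfun_{B(x,r)}\leq\varphi_r^{+}$, with $\int(\varphi_r^{+}-\varphi_r^{-})\diff\mu$ small and H\"older norms bounded by a power of $1/r$; since $B_n(x)$ is defined by $\mu(B_n(x))=M_n$, the upper regularity forces its radius to be $\gtrsim M_n^{1/s}\gtrsim((\log n)^{4+\varepsilon}/n)^{1/s}$, so $1/r$ is polynomially bounded in $n$ and the exponential decay from (i) absorbs those norms. To build $\varphi_r^{\pm}$ one interpolates across a spherical shell of width $\rho$, with H\"older constant $\lesssim\rho^{-1}$; the $\mu$-mass of that shell is estimated by covering the $(N-1)$-dimensional sphere $\{d(\cdot,x)=r\}$ thickened by $\rho$ with $\lesssim(r/\rho)^{N-1}$ balls of radius $\rho$ and applying $\mu(B(\cdot,\rho))\leq c_1\rho^{s}$, which gives $\lesssim r^{N-1}\rho^{\,s-N+1}$ — small precisely because $s>N-1$, and optimizing $\rho$ against the main term $\mu(B_n(x))=M_n$ leaves room to spare (this is also where one reduces, by compactness and finitely many charts, to the Euclidean picture). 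Finally the geometry must be transported to $\Sigma_A$ through the merely H\"older map $\pi$, which costs a further power governed by the H\"older exponent of $\pi$ but keeps the bound polynomial in $1/r$. I expect this last coordination — making the norm/error budget in (ii) survive the passage through the coding, so that the abstract hypothesis of Theorem~\ref{thm:general} is genuinely verified rather than merely plausible — to be the main obstacle; once it is in hand, Theorem~\ref{thm:general} applies to $(M,\mu,f,d)$ and the conclusion of Theorem~\ref{thm:main} follows.
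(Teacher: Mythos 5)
Your proposal follows essentially the same route as the paper: reduce to Theorem~\ref{thm:general}, obtain exponential decay of $3$-fold correlations for H\"older observables from the thermodynamic formalism for Axiom A systems (the paper simply cites Kotani--Sunada \cite[Proposition~3.1]{kotani2001Pressure} rather than re-deriving it from the transfer operator), and verify the geometric hypotheses by covering a shell of width $\varepsilon$ around the sphere of radius $\rho$ with $\lesssim \varepsilon^{1-N}$ balls and applying $\mu(B(\cdot,\varepsilon))\leq c_1\varepsilon^{s}$ to get a shell measure $\lesssim\varepsilon^{1+s-N}$, which is exactly where $s>N-1$ enters (the paper gets the covering count from Croke's lower bound $\vol(B(y,\varepsilon))\gtrsim\varepsilon^{N}$ together with the Lipschitz dependence of $\vol(B(y,r))$ on $r$, and separately records the packing bound $P_\varepsilon(M)\lesssim\varepsilon^{-N}$, which Theorem~\ref{thm:general} also requires and which you should state explicitly). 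One correction: the ``main obstacle'' you identify at the end --- transporting the ball geometry to $\Sigma_A$ through the H\"older coding $\pi$ --- does not arise. The H\"older regularity of $\pi$ is spent once, in step (i), to convert the symbolic correlation decay into a decay statement for H\"older observables on $(M,d)$ itself; after that, all of the indicator-sandwiching and shell estimates are carried out directly on the manifold (in the paper this happens inside Propositions~\ref{prop:main1} and \ref{prop:main2}, which are part of the proof of Theorem~\ref{thm:general}), so no geometry ever needs to be pushed into the symbolic model and no extra power of $1/r$ is lost there.
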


The above theorem applies to hyperbolic toral automorphisms with the
Lebesgue measure as the equilibrium measure, which corresponds to the
$0$ potential. It is clear that the Lebesgue measure satisfies the
assumption. By perturbing the potential by a H\"older continuous
function with small norm, we obtain a different equilibrium state for
which the above assumption on the measure holds as well. Equilibrium
states that are absolutely continuous with respect to the Lebesgue
measure also satisfy the assumption.

For shrinking targets (non-recurrence),
Chernov and Kleinbock~\cite[Theorem~\textup{2.4}]{chernov2001Dynamical}
obtain a strong Borel--Cantelli lemma when $T$ is an Anosov
diffeomorphism, $\mu$ is an equilibrium state given by a H\"older
continuous potential, and the targets are \emph{eventually quasi-round}
rectangles.

\subsection*{Paper Structure}

In Section~\ref{sec:results}, we state the main result of the paper,
namely Theorem~\ref{thm:general}, and give examples of systems
satisfying the assumptions. Applications to return times and pointwise
dimension are also given.
In Section~\ref{sec:lemmas}, we prove a series of lemmas establishing
properties of the measure $\mu$ and functions 
$r_n \colon X \to [0,\infty)$ defined as the radius of the balls
$B_n(x)$. 
In Section~\ref{sec:props}, we obtain estimates for the measure and
correlations of the sets $E_n = \{x\in X : T^{n}x \in B_n(x)\}$.
This is done in Propositions~\ref{prop:main1} and \ref{prop:main2}
using indicator functions and decay of correlations.
Finally, in Section~\ref{sec:thm:proof}, we use the previous
propositions together with Theorem~\ref{thm:persson} to prove
Theorem~\ref{thm:general}. Theorem~\ref{thm:main} then follows once we
show that the assumptions of Theorem~\ref{thm:general} are satisfied.
We conclude with final remarks (Section~\ref{sec:finalremarks}).

\section{Results}\label{sec:results}
\subsection{Setting and Notation}
We say that $T$ preserves the measure $\mu$ if $\mu(T^{-1}A) = \mu(A)$
for all $\mu$-measurable sets $A$. From now on, $(X,\mu,T,d)$ will
denote a metric measure-preserving system (m.m.p.s.) for which $(X,d)$
is compact, $\mu$ is a Borel probability measure, and $T$ is a
measurable transformation. Given a sequence $(M_n)$ in $[0,1]$ and
ignoring issues for now, define the open ball $B_n(x)$ around $x$ by
$\mu(B_n(x)) = M_n$ and define the functions
$r_n \colon X \to [0,\infty)$ by $B(x,r_n(x)) = B_n(x)$.
For each $n \in \naturals$ define
\[
  E_n = \{x \in X : T^{n}x \in B(x,r_n(x))\}.
\]
For a set $A$ we denote the diameter, cardinality and closure of $A$
by $\diam A$, $|A|$ and $\overline{A}$ respectively. By the
$\delta$-neighbourhood of $A$ we mean the set
$A(\delta) = \{x \in X : \dist(x,A) < \delta\}$.
For $a, b \in \reals$ we write $a \leqs b$ to mean that there exists a
constant $c > 0$ depending only on $(X,\mu,T,d)$ such that $a \leq cb$.
For $V\subset X$ and $\varepsilon > 0$ let $P_{\varepsilon}(V)$ denote
the packing number of $V$ by balls $B$ of radius $\varepsilon$. This is
the maximum number of pairwise disjoint balls of radius $\varepsilon$
with centres in $V$. Since $X$ is compact, $P_{\varepsilon}(V)$ is
finite for all $V\subset X$ and $\varepsilon > 0$.
For a compact smooth manifold $M$ let $d$ denote the
induced metric and let $\vol$ denote the induced volume measure. The
injectivity radius of $M$, 
which is the largest radius for which the exponential map at every
point is a diffeomorphism,
is denoted by $\inj_M$.
Since $M$ is compact, $\inj_M > 0$ (see for instance Chavel
\cite{chavel2006Riemannian}).

Note that the open balls $B_n(x)$ and radii $r_n(x)$ may not
exist for some $n \in \naturals$ and $x \in X$. Sufficient conditions
on $\mu$, $x$ and $(M_n)$ for which $B_n(x)$ and $r_n(x)$ exist are
given in Lemma~\ref{lemma:r:existence}.
Notice that for a zero-one law one would prove that 
$\limsup_{n\to \infty}E_n$ has either zero or full measure.
Furthermore, in the shrinking target case, when one considers fixed
targets $B_n = B(y_n,r_n)$, the corresponding sets are
$\tilde{E}_n = \{ x \in X : T^{n}x \in B_n\} = T^{-n} B_n$
and we can use the invariance of the measure $\mu$ to conclude that
$\mu(\tilde{E}_n) = M_n$. However, in the recurrence case, when the
targets depend on $x$, we instead settle for estimates on the measure
and correlations of the sets $E_n$. We do so in
Proposition~\ref{prop:main3} using decay of multiple correlations as
our main tool. We state the definition of decay of multiple
correlations for H\"older continuous observables as it is the form most
commonly found in literature; however, for our purposes we only require
decay of correlations for Lipschitz continuous observables.

\begin{definition}[$r$-fold Decay of Correlations]
  For an m.m.p.s\ $(X,\mu,T,d)$, $r \in \naturals$ and
  $\theta \in (0,1]$, we say that \emph{$r$-fold correlations decay
  exponentially for $\theta$-H\"older continuous observables} if there
  exists constants $c_1 >0$ and $\tau \in (0,1)$ such that for all
  $\theta$-H\"older continuous functions
  $\varphi_k : X \to \reals$, $k=0,\dotsc,r-1$, and integers
  $0 = n_0 < n_1 < \dotsc < n_{r-1}$
  \[
    \Bigl| \int \prod_{k=0}^{r-1} \varphi_k \circ T^{n_k} \diff\mu -
    \prod_{k=0}^{r-1} \int \varphi_k \diff\mu \Bigr|
    \leq c_1 e^{-\tau n} \prod_{k=0}^{r-1} \norm{\varphi_k}_{\theta},
  \]
  where $n = \min \{n_{i+1} - n_{i}\}$, and
  \[
    \norm{\varphi}_{\theta} 
    = \sup_{x\neq y} \frac{|\varphi(x) - \varphi(y)|}{d(x,y)^{\theta}} 
    + \norm{\varphi}_{\infty}.
  \]
\end{definition}

It is well known that if $M$ is a compact smooth manifold, $f$ an
Axiom~A diffeomorphism that is topologically mixing when restricted to
a basic set $\Omega$, and $\varphi \colon \Omega \to \reals$
H\"older continuous, then there exists a unique equilibrium state
$\mu_\varphi$ for $\varphi$ such that $2$-fold correlations decay
exponentially for $\theta$-H\"older continuous observables for all
$\theta \in (0,1]$ (see Bowen \cite{bowen2008Equilibrium}).
In the same setting, Kotani and Sunada essentially prove in
\cite[Proposition~3.1]{kotani2001Pressure} that, in fact, $r$-fold
correlations decay exponentially for $\theta$-H\"older continuous
observables for all $r \geq 1$ and $\theta \in (0,1]$.
A similar result is proven in \cite[Theorem~7.41]{chernov2006Chaotic}
for billiards with bounded horizon and no corners and for
\emph{dynamically H\"older continuous functions} -- a class of
functions that includes H\"older continuous functions. This result can
be extended to billiards with no corners and unbounded horizon, and
billiards with some corners and bounded horizon.
Dolgopyat \cite{dolgopyat2004Limit} proves that multiple correlations
decay exponentially for partially hyperbolic systems that are `strongly
u-transitive with exponential rate.' Dolgopyat lists systems in
Section~6 of his paper for which this property holds:
some time one maps of Anosov flows, quasi-hyperbolic toral
automorphisms, some translations on homogeneous spaces, and mostly
contracting diffeomorphisms on $3$-dimensional manifolds.
A similar property referred to as `Property $(\pcal_t)$' for real
$t \geq 1$ is used by P\`ene in \cite{pene2004Multiple}. It is stated
that when $t > 1$, property $(\pcal_t)$ holds for dynamical systems to
which one can apply Young's method in \cite{young1998Statistical}.

We state some assumptions on the system $(X,\mu,T,d)$ and the sequence
$(M_n)$ that are used to obtain our result.

\begin{assumption}\label{assumption:seq}
  There exists $\varepsilon > 0$ such that
  \[
    M_n \geq \frac{(\log n)^{4 + \varepsilon}}{n}
  \]
  and
  \[
    \lim_{\alpha \to 1^{+}}\limsup_{n \to \infty}
    \frac{M_n}{M_{\lfloor \alpha n \rfloor}} = 1.
  \]
\end{assumption}

\begin{assumption}\label{assumption:ball}
  There exists $s>0$ such that for all $x \in X$ and $r>0$
  \[
    \mu(B(x,r)) \leqs r^{s}.
  \]
\end{assumption}

\begin{assumption}\label{assumption:annulus}
  There exists positive constants $\rho_0$ and $\alpha_0$ such that
  for all $x \in X$ and $0 < \varepsilon < \rho \leq \rho_0$
  \[
    \mu \{y : \rho \leq d(x,y) < \rho + \varepsilon\} 
    \leqs \varepsilon^{\alpha_0}.
  \]
\end{assumption}

\begin{assumption}\label{assumption:packing}
  There exists positive constants $K$ and $\varepsilon_0$ such that
  for any $\varepsilon \in (0,\varepsilon_0)$ the packing number of
  $X$ satisfies
  \[
    P_{\varepsilon}(X) \leqs \varepsilon^{-K}.
  \]
\end{assumption}

Assumption~\ref{assumption:seq} is needed for technical reasons that
are explicit in \cite[Lemma~\textup{2}]{persson2023Strong}.
Assumption~\ref{assumption:ball} is a standard assumption.
Assumption~\ref{assumption:annulus} is more restrictive; however, in
some cases it can be deduced from Assumption~\ref{assumption:ball}. 
Assumption~\ref{assumption:packing} holds for very general spaces,
e.g., all compact smooth manifolds.
As we will see in the proof of Theorem~\ref{thm:main},
Assumptions~\ref{assumption:annulus} and \ref{assumption:packing} hold
when $X$ is a compact smooth $N$-dimensional manifold and $s > N - 1$.
Assumption~\ref{assumption:annulus} is also used in
\cite{haydn2013Note}, stated as Assumption~B, in which the authors
outline spaces that satisfy the assumption. For instance, the
assumption is satisfied by
dispersing billiard systems,
compact group extensions of Anosov systems,
a class of Lozi maps and
one-dimensional non-uniformly expanding interval maps with invariant
probability measure $d\mu = h\mathop{}\!d\lambda$, where $\lambda$ is
the Lebesgue measure and $h \in L^{1+\delta}(\lambda)$ for some
$\delta > 0$.

Assumption~\ref{assumption:packing} is used in the following
construction of a partition.
Let $\varepsilon < \varepsilon_0$ and let
$\{B(x_k,\varepsilon)\}_{k=1}^{L}$ be a maximal $\varepsilon$-packing
of $X$, i.e., $L = P_{\varepsilon}(X)$. Then
$\{B(x_k,2\varepsilon)\}_{k=1}^{L}$ covers $X$. Indeed, for any
$x \in X$ we must have that $d(x,x_k) < 2\varepsilon$ for some $k$, for
we could otherwise add $B(x,\varepsilon)$ to the packing, contradicting
our maximality assumption. Now, let $A_1 = B(x_1,2\varepsilon)$ and
recursively define
\begin{equation}\label{eq:X:partition}
  A_k = B(x_k,2\varepsilon) \setminus 
  \bigcup_{i=1}^{k-1} B(x_i,2\varepsilon)
\end{equation}
for $k = 2,\dotsc,L$.
Then $\{A_k\}_{k=1}^{L}$ partitions $X$ and satisfies
$\diam A_k < 4\varepsilon$. By Assumption~\ref{assumption:packing},
$L \leqs \varepsilon^{-K}$.

As previously mentioned, Theorem~\ref{thm:general} extends Persson's
result in \cite{persson2023Strong} by relaxing the conditions on
$(X,\mu,T,d)$. In \cite{persson2023Strong}, it is assumed that
$X = [0,1]$ and that $T \colon [0,1] \to [0,1]$ preserves a measure
$\mu$ for which ($2$-fold) correlations decay exponentially for $L_1$
against $BV$ functions. This is satisfied, for example, when $T$ is
piecewise uniformly expanding map and $\mu$ is a Gibbs measure.
That correlations decay exponentially for $L_1$ against $BV$ functions
allows one to directly apply it to indicator functions.
As established, when $T$ is an Axiom~A diffeomorphism defined
on a manifold $X$, multiple correlations decay exponentially for
H\"older continuous functions. Thus, we may not directly apply the
result to indicator functions and must use an approximation argument,
complicating the proof. That $\mu$ is non-atomic in
\cite{persson2023Strong} translates in Theorem~\ref{thm:main} to $\mu$
satisfying Assumption~\ref{assumption:ball} with $s > \dim M - 1$.
Furthermore, Assumption~\ref{assumption:ball} for $\mu$ and
Assumption~\ref{assumption:seq} for $(M_n)$ remain unchanged. As it
will be of use later, we restate the main result of
\cite{persson2023Strong} by combining Proposition~1 and the main
theorem. We do so in order to have a more general statement suitable
for our use.

\begin{theorem}[(Persson \cite{persson2023Strong})]\label{thm:persson}
  Let $(X, \mu, T, d)$ be a m.m.p.s., $(M_n)$ a sequence in $[0,1]$
  satisfying Assumption~\ref{assumption:seq} and $\mu$ a measure
  satisfying Assumption~\ref{assumption:ball}. Define $B_n(x)$ to be
  the ball around $x$ such that $\mu(B_n(x)) = M_n$ and let
  $E_n = \{x \in X : T^{n}x \in B_n(x)\}$.
  Suppose that there exists $C,\eta > 0$ such that for all
  $n,m \in \naturals$
  \begin{equation}\label{eq:persson:measure}
    | \mu(E_n) - M_n | \leq C e^{-\eta n}
  \end{equation}
  and
  \begin{equation}\label{eq:persson:correlation}
    \mu(E_{n+m} \cap E_{n}) \leq \mu(E_{n+m})\mu(E_n)
    + C(e^{-\eta n} + e^{-\eta m}).
  \end{equation}
  Then
  \[
    \lim_{n \to \infty}\frac{\sum^{n}_{k=1} \charfun_{B_k(x)}(T^{k}x)}
    {\sum^{n}_{k=1} \mu(B_k(x))} = 1
  \]
  for $\mu$-a.e.\ $x \in X$.
\end{theorem}

In this paper, we establish inequalities \eqref{eq:persson:measure} and
\eqref{eq:persson:correlation} for systems satisfying multiple
decorrelation for Lipschitz continuous observables and conclude a
strong Borel--Cantelli lemma for recurrence.

\subsection{Main Result}
The main result of the paper is the following theorem.

\begin{theorem}\label{thm:general}
  Let $(X,\mu,T,d)$ be an m.m.p.s.\ for which $3$-fold correlations
  decay exponentially for Lipschitz continuous observables and such
  that Assumptions~\ref{assumption:ball}--\ref{assumption:packing}
  hold. Suppose further that $(M_n)$ is a sequence in $[0,1]$
  converging to $0$ and satisfying Assumption~\ref{assumption:seq}.
  Then
  \[
    \lim_{n \to \infty}\frac{\sum^{n}_{k=1} \charfun_{B_k(x)}(T^{k}x)}
    {\sum^{n}_{k=1} \mu(B_k(x))} = 1
  \]
  for $\mu$-a.e.\ $x \in X$.
\end{theorem}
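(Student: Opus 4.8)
The strategy is to reduce Theorem~\ref{thm:general} to Persson's theorem (Theorem~\ref{thm:persson}) by establishing its two quantitative hypotheses: constants $C,\eta>0$ such that $|\mu(E_n)-M_n|\leq Ce^{-\eta n}$ and $\mu(E_{n+m}\cap E_n)\leq\mu(E_{n+m})\mu(E_n)+C(e^{-\eta n}+e^{-\eta m})$ for all $n,m\in\naturals$. Once these hold, Theorem~\ref{thm:persson} applies verbatim, since Assumption~\ref{assumption:seq} is precisely the condition that theorem imposes on $(M_n)$ and Assumption~\ref{assumption:ball} is assumed as well. So the entire proof is the derivation of those two estimates from $3$-fold decay of correlations together with Assumptions~\ref{assumption:ball}--\ref{assumption:packing}; this is the content of Sections~\ref{sec:lemmas}--\ref{sec:props}, with the final assembly in Section~\ref{sec:thm:proof}.

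\emph{Preliminaries on the radii.} Because $M_n\to0$ and, by Assumption~\ref{assumption:annulus} with $\varepsilon\to0$, spheres carry no mass, the map $r\mapsto\mu(B(x,r))$ is continuous on a fixed interval $[0,\rho_0]$ uniformly in $x$, so for all large $n$ and a.e.\ $x$ the radius $r_n(x)$ exists with $\mu(B(x,r_n(x)))=M_n$ exactly (Lemma~\ref{lemma:r:existence}). Assumption~\ref{assumption:ball} then forces $r_n(x)\geqs M_n^{1/s}$, which is only polynomially small in $n$, and Assumptions~\ref{assumption:ball}--\ref{assumption:annulus} yield a modulus of continuity for the family $r_n$, in the sense that $\mu\bigl(B(x,r_n(x))\,\triangle\,B(x',r_n(x'))\bigr)$ is bounded by a positive power of $d(x,x')$ when the latter is small. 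One also needs $x\mapsto r_n(x)$ and the sets $E_n$ to be measurable, which is routine from the above.

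\emph{The two estimates.} Write $\charfun_{E_n}(x)=g_n(x,T^nx)$ with $g_n(x,y)=\charfun_{\{d(x,y)<r_n(x)\}}$, and $\charfun_{E_{n+m}\cap E_n}(x)=g_{n+m}(x,T^{n+m}x)\,g_n(x,T^nx)$. Fix a small scale $\varepsilon$, take the partition $\{A_j\}_{j=1}^{L}$ of \eqref{eq:X:partition} with $\diam A_j<4\varepsilon$ and $L\leqs\varepsilon^{-K}$ (Assumption~\ref{assumption:packing}), and on each $A_j$ replace $r_n$ by a single representative radius and $g_n$ by $\tilde g_n(x,y)=\sum_j\varphi_j(x)\psi_{j,n}(y)$, where $\varphi_j$ and $\psi_{j,n}$ are Lipschitz regularisations at scale $\varepsilon$ of $\charfun_{A_j}$ and of the indicator of a ball centred at $x_j$; these have H\"older norm $\leqs\varepsilon^{-\beta}$ for a fixed $\beta$. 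The error $|g_n-\tilde g_n|$ is supported, on $A_j$, in $A_j\cap T^{-n}\mathcal A_j$ for an annulus $\mathcal A_j$ around $x_j$ of width $\leqs\varepsilon$, and collapsing $r_n$ to the representative radius costs another such annular set (here the modulus of continuity of $r_n$ enters); summing over $j$ and applying $2$-fold decay of correlations to each $A_j\cap T^{-n}\mathcal A_j$,
\[
  \mu\bigl(\{x:|d(x,T^nx)-r_n(x)|<C\varepsilon\}\bigr)
  \leqs\sum_{j}\mu(A_j)\mu(\mathcal A_j)+L\,\varepsilon^{-\beta'}e^{-\tau n}
  \leqs\varepsilon^{\alpha_0}+\varepsilon^{-(K+\beta')}e^{-\tau n},
\]
using Assumption~\ref{assumption:annulus} and $\sum_j\mu(A_j)=1$. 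Hence $|\mu(E_n)-\int\tilde g_n(x,T^nx)\diff\mu|$ is at most the right-hand side, while $2$-fold decay of correlations gives $\int\tilde g_n(x,T^nx)\diff\mu=\int\!\!\int\tilde g_n(x,y)\diff\mu(x)\diff\mu(y)+O(\varepsilon^{-(K+\beta')}e^{-\tau n})$, and $\int\!\!\int\tilde g_n(x,y)\diff\mu(x)\diff\mu(y)=\int\!\!\int g_n(x,y)\diff\mu(x)\diff\mu(y)+O(\varepsilon^{\alpha_0})=\int\mu(B(x,r_n(x)))\diff\mu(x)+O(\varepsilon^{\alpha_0})=M_n+O(\varepsilon^{\alpha_0})$. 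Choosing $\varepsilon=e^{-\delta n}$ with $\delta>0$ small enough that $\delta(K+\beta')<\tau$ (and noting $\varepsilon\ll r_n(x)\leq\rho_0$, so Assumption~\ref{assumption:annulus} genuinely applies) gives $|\mu(E_n)-M_n|\leqs e^{-\eta n}$, the first hypothesis. The second is identical with $3$-fold decay of correlations applied to $\tilde g_{n+m}(x,T^{n+m}x)\,\tilde g_n(x,T^nx)$, the gaps between the times $0<n<n+m$ being $n$ and $m$: one obtains $\mu(E_{n+m}\cap E_n)=\int\mu(B(x,r_{n+m}(x)))\,\mu(B(x,r_n(x)))\diff\mu(x)+O(e^{-\eta\min(n,m)})=M_{n+m}M_n+O(e^{-\eta n}+e^{-\eta m})$, and since $M_{n+m}M_n=\mu(E_{n+m})\mu(E_n)+O(e^{-\eta n})$ by the first estimate, the second hypothesis follows. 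Theorem~\ref{thm:persson} then yields the claim. (These two estimates are Propositions~\ref{prop:main1} and~\ref{prop:main2}.)

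\emph{Main obstacle.} The delicate point is balancing the two competing errors in the regularisation step: the approximation error is a positive power $\varepsilon^{\alpha_0}$ of the scale, favouring small $\varepsilon$, whereas the decay-of-correlations error carries a factor $L\cdot\mathrm{poly}(\varepsilon^{-1})=\mathrm{poly}(\varepsilon^{-1})$ from the $\leqs\varepsilon^{-K}$ partition cells and the H\"older norms $\leqs\varepsilon^{-\beta}$ of the smoothed indicators, which blows up as $\varepsilon\to0$. One must verify that the exponential gain $e^{-\tau n}$ genuinely absorbs this polynomial loss under the choice $\varepsilon=e^{-\delta n}$, and simultaneously that this $\varepsilon$ stays well below $r_n(x)\geqs M_n^{1/s}$ and below $\rho_0$, so that Assumption~\ref{assumption:annulus} is applicable to the annuli $\mathcal A_j$. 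Setting up the piecewise H\"older approximation of $g_n$ carefully — in particular showing that collapsing $r_n$ to a representative radius on each $A_j$ costs only $O(\varepsilon^{\alpha_0})$ in measure, which is exactly where the oscillation control of the $r_n$ from Section~\ref{sec:lemmas} is used — is the technical heart of the argument.
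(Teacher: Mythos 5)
Your proposal is correct and follows essentially the same route as the paper: reduce to Persson's theorem via the two estimates of Proposition~\ref{prop:main3}, which are in turn proved by regularising the indicator functions over the packing-based partition and balancing the $\varepsilon^{\alpha_0}$ approximation error against the $\mathrm{poly}(\varepsilon^{-1})e^{-\tau n}$ correlation error by taking $\varepsilon=e^{-\delta n}$ with $\delta$ small. The only cosmetic difference is that the paper runs the regularisation at three separate exponentially small scales ($\varepsilon_n$, $\rho_n$, $\delta_n$) rather than a single one, but the bookkeeping is the same.
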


\begin{remark}
  Although we consider open balls $B_k(x)$ for our results, we only
  require control over the measure of $B_k(x)$ and its
  $\delta$-neighbourhoods. Thus, it is possible to substitute the open
  balls for closed balls or other neighbourhoods of $x$, as long as one
  retains similar control of the sets as stated in
  Assumptions~\ref{assumption:ball} and \ref{assumption:annulus}.
  Furthermore, it is possible to relax the condition that $(M_n)$ 
  converges to $0$ as long as one can ensure that $r_n \leq \rho_0$ on
  $\supp\mu$ for large $n$.
\end{remark}

After proving that Assumptions~\ref{assumption:annulus} and
\ref{assumption:packing} hold when $X$ is a compact smooth manifold
and $\mu$ satisfies Assumption~\ref{assumption:ball} for $s > N - 1$,
we apply the above result to Axiom~A diffeomorphisms and obtain
Theorem~\ref{thm:main}.
In \S\ref{final:applications} we list some systems that satisfy
Assumptions~\ref{assumption:ball}--\ref{assumption:packing}, but it
remains to show that some of these systems satisfy $3$-fold decay of
correlations.

\subsection{Return Times Corollary}
Define the \emph{hitting time} of $x\in X$ into a set $B$ by
\[
  \tau_{B}(x) = \inf \{k > 0 : T^{k}x \in B\}.
\]
When $x \in B$ we call $\tau_{B}(x)$ the \emph{return time} of $x$
into $B$.
In \textup{2007}, Galatolo and Kim \cite{galatolo2007Dynamical}
showed that if a system satisfies a Strong Borel--Cantelli lemma for
shrinking targets with any centre, then for every $y\in X$ the hitting
times satisfy 
\[
  \lim_{r \to 0} \frac{\log \tau_{\overline{B}(y,r)}(x)}
  {-\log \mu(\overline{B}(y,r))} = 1
\]
for $\mu$-a.e.\ $x$.
In our case, the proof translates directly to return times (see the
corollary in \cite{persson2023Strong} for a simplified proof).
\begin{corollary}\label{corollary:return}
  Under the assumptions of Theorem \ref{thm:general},
  \[
    \lim_{r \to 0} \frac{\log \tau_{B(x,r)}(x)}{-\log \mu(B(x,r))} = 1
  \]
  for $\mu$-a.e.\ $x$.
\end{corollary}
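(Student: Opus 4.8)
The plan is to deduce the result from Theorem~\ref{thm:general} by the standard Galatolo--Kim argument, adapted to the recurrence setting as indicated in \cite{persson2023Strong}. The inequality $\tau_{B(x,r)}(x) \leq n$ is equivalent to $T^k x \in B(x,r)$ for some $1 \leq k \leq n$, so the two bounds in the limit correspond to the convergence and divergence parts of a Borel--Cantelli dichotomy applied along suitable sequences of radii.

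For the lower bound, I would fix $\varepsilon > 0$ and, using Assumption~\ref{assumption:ball} to control $\mu(B(x,r))$ from above, choose a sequence $r_k \to 0$ (depending only on $k$, e.g.\ $r_k = k^{-\beta}$ for appropriate $\beta$) so that the sequence $M_k := \mu(B_k(x))$ still satisfies Assumption~\ref{assumption:seq}; here one uses that $\mu(B(x,r)) \leqs r^s$ forces $M_k$ to decay at most polynomially, and one can arrange $M_k \geq (\log k)^{4+\varepsilon}/k$ while keeping the regularity condition $\lim_{\alpha\to 1^+}\limsup_k M_k/M_{\lfloor\alpha k\rfloor} = 1$. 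Then Theorem~\ref{thm:general} gives, for $\mu$-a.e.\ $x$, infinitely many $k$ with $T^k x \in B_k(x)$, and in particular $\tau_{B(x,r_k)}(x) \leq k$ for those $k$. Combining this with $\mu(B(x,r_k)) \leqs r_k^s$ and letting $r \to 0$ along the $r_k$ (and interpolating for intermediate $r$ using monotonicity of $r \mapsto \tau_{B(x,r)}(x)$ and of $r \mapsto \mu(B(x,r))$) yields $\liminf_{r\to 0} \log\tau_{B(x,r)}(x)/(-\log\mu(B(x,r))) \geq 1 - \varepsilon$; letting $\varepsilon \to 0$ finishes this direction.

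For the upper bound, the key observation is that if $\tau_{B(x,r)}(x) = n$ then $T^k x \notin B(x,r)$ for $1 \leq k \leq n-1$, so choosing instead a sequence of radii $\tilde r_k \to 0$ with $\mu(B(x,\tilde r_k))$ decaying \emph{fast enough} that $\sum_k \mu(B_k(x)) < \infty$ is not available directly (the sets depend on $x$); instead one runs the divergence part of Theorem~\ref{thm:general} with a slightly larger target. Concretely, for $\varepsilon > 0$ pick radii so that the associated measures $M_k$ decay like $k^{-(1+\varepsilon)}$-ish along a subsequence, apply the strong Borel--Cantelli conclusion to get $\sum_{k\leq n}\charfun_{B_k(x)}(T^k x) \sim \sum_{k \leq n} M_k \to \infty$, so that in particular $T^k x$ enters $B_k(x)$ for arbitrarily large $k$; translating the first such entrance time into a bound on $\tau_{B(x,r)}(x)$ and using $-\log\mu(B(x,r)) \gtrsim -\log r$ gives the matching upper bound $\limsup_{r\to 0}\log\tau_{B(x,r)}(x)/(-\log\mu(B(x,r))) \leq 1 + \varepsilon$.

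The main obstacle is bookkeeping the $x$-dependence of the targets: unlike in the shrinking-target case of Galatolo--Kim, here both the radius $r_k(x)$ and the measure $M_k$ are linked through $\mu(B_k(x)) = M_k$, so one must be careful that the sequence $(M_k)$ chosen to invoke Theorem~\ref{thm:general} is genuinely independent of $x$ while still comparing correctly, for $\mu$-a.e.\ $x$, to $\mu(B(x,r))$ at the interpolating radii; Assumption~\ref{assumption:ball} (and, for the lower bound, a lower regularity bound which here one can extract from the fact that $r_n(x)$ is well-defined, cf.\ Lemma~\ref{lemma:r:existence}) is what makes this comparison uniform enough. Once this is set up, the rest is the routine monotonicity-and-interpolation argument, and I would simply refer to the corollary in \cite{persson2023Strong} for the details.
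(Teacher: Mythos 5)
There is a genuine gap: you have the two halves of the Galatolo--Kim argument interchanged, and as written neither direction goes through. The strong Borel--Cantelli conclusion of Theorem~\ref{thm:general} produces \emph{hits}, i.e.\ times $k$ with $T^{k}x\in B_k(x)$ and hence \emph{upper} bounds $\tau_{B(x,r_k(x))}(x)\le k$; since $-\log M_k\sim\log k$ under Assumption~\ref{assumption:seq}, this can only bound the ratio $\log\tau/(-\log\mu)$ from \emph{above}, so it cannot yield $\liminf\ge 1-\varepsilon$ as you claim. The inequality $\liminf_{r\to 0}\log\tau_{B(x,r)}(x)/(-\log\mu(B(x,r)))\ge 1$ is the statement that there are \emph{no early returns}, $\tau_{B(x,r)}(x)\ge\mu(B(x,r))^{-1+\varepsilon}$, and it is proved by a convergence-type Borel--Cantelli estimate: one bounds $\mu\{x:\exists\,k\le M_n^{-1+\varepsilon},\ T^{k}x\in B(x,r_n(x))\}$ by summing the ``mismatched'' measure estimates $\mu\{x:T^{k}x\in B(x,r_n(x))\}\le M_n+Ce^{-\eta k}$ (an extension of Proposition~\ref{prop:main1} to $k\ne n$), obtaining roughly $M_n^{\varepsilon}$ plus an exponentially small tail, and then applies the first Borel--Cantelli lemma along a geometric subsequence of $n$. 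This input is \emph{not} contained in the statement of Theorem~\ref{thm:general}, so this half cannot be ``deduced from the theorem'' alone; one must go back to the correlation estimates (this is exactly what the corollary in \cite{persson2023Strong} does).

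Your sketch of the other direction also does not control the $\limsup$. Knowing only that $T^{k}x\in B_k(x)$ for \emph{arbitrarily large} $k$ bounds the ratio along a subsequence of radii, i.e.\ it bounds the $\liminf$, not the $\limsup$. To get $\limsup\le 1$ one needs, for \emph{every} large $n$, a hit in a window $k\in(n,Cn]$; since the balls are nested, such a hit lies in $B_n(x)$ and gives $\tau_{B_n(x)}(x)\le Cn\le M_n^{-1-o(1)}$, after which the monotonicity/interpolation step handles intermediate $r$. Producing a hit in every such window is precisely where the full asymptotic $\sum_{k\le n}\charfun_{B_k(x)}(T^{k}x)\sim\sum_{k\le n}M_k$ and the slow decay $M_k\ge(\log k)^{4+\varepsilon}/k$ are used (so that $\sum_{n<k\le Cn}M_k\to\infty$); your proposed choice $M_k\sim k^{-(1+\varepsilon)}$ both violates Assumption~\ref{assumption:seq} and makes $\sum M_k$ converge, defeating the purpose. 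Finally, two smaller points: fixing radii $r_k=k^{-\beta}$ independently of $x$ is incompatible with the setup of Theorem~\ref{thm:general}, which requires a fixed sequence $(M_k)$ and $x$-dependent radii $r_k(x)$; and Assumption~\ref{assumption:ball} only bounds $\mu(B(x,r))$ from above, so it cannot be used to guarantee $\mu(B(x,r_k))\ge(\log k)^{4+\varepsilon}/k$ for prescribed radii.
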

The above corollary has an application to the pointwise dimension of
$\mu$. For each $x \in X$ define the \emph{lower} and
\emph{upper pointwise dimensions} of $\mu$ at $x$ by
\[
  \underline{d}_{\mu}(x)
    = \liminf_{r \to 0} \frac{\log \mu(B(x,r))}{\log r}
  \quad \text{and} \quad
  \overline{d}_{\mu}(x)
    = \limsup_{r \to 0} \frac{\log \mu(B(x,r))}{\log r},
\]
and the \emph{lower} and \emph{upper recurrence rates} of $x$ by
\[
  \underline{R}(x)
    = \liminf_{r \to 0}\frac{\log\tau_{B(x,r)}(x)}{-\log r}
  \quad \text{and} \quad
  \overline{R}(x)
    = \limsup_{r \to 0}\frac{\log\tau_{B(x,r)}(x)}{-\log r}.
\]
When $T$ is a Borel measurable transformation that preserves a Borel
probability measure $\mu$ and $X \subset \reals^{N}$ is any measurable
set, Barreira and Saussol
\cite[Theorem~\textup{1}]{barreira2001Hausdorff} proved that
\[
  \underline{R}(x) \leq \underline{d}_{\mu}(x)
  \quad \text{and} \quad
  \overline{R}(x) \leq \overline{d}_{\mu}(x)
\]
for $\mu$-a.e.\ $x\in X$.
Using Corollary~\ref{corollary:return}, we obtain that for 
$\mu$-a.e.\ $x\in X$
\begin{equation}\label{eq:returndim}
  \underline{R}(x) = \underline{d}_{\mu}(x)
  \quad \text{and} \quad
  \overline{R}(x) = \overline{d}_{\mu}(x),
\end{equation}
whenever the system satisfies a Strong Borel-Cantelli lemma for
recurrence.
Contrast the above result with
\cite[Theorem~\textup{4}]{barreira2001Hausdorff},
which states that \eqref{eq:returndim} holds when $\mu$ has
\emph{long return times}, i.e., for $\mu$-a.e.\ $x\in X$ and
sufficiently small $\varepsilon > 0$ 
\[
  \liminf_{r \to 0} \frac{\log (\mu 
  \{y\in B(x,r) : \tau_{B(x,r)}(y) \leq
\mu(B(x,r))^{-1+\varepsilon}\})}
  {\log \mu(B(x,r))} > 1.
\]
Furthermore, if $X$ is a compact smooth manifold, then
\cite[Theorem~\textup{5}]{barreira2001Hausdorff} states that if the
equilibrium measure $\mu$ is ergodic and supported on a locally maximum
hyperbolic set of a $C^{1+\alpha}$ diffeomorphism, then $\mu$ has long
return times and $\underline{R}(x) = \overline{R}(x) = \dim_{H}\mu$ for
$\mu$-a.e.\ $x$.
On the other hand, Barreira, Pesin and
Schmeling~\cite{barreira1999Dimension} prove that if $f$ is a
$C^{1+\alpha}$ diffeomorphism on a smooth compact manifold and $\mu$ a
hyperbolic $f$-invariant compactly supported ergodic Borel probability
measure, then
$\underline{d}_{\mu}(x) = \overline{d}_\mu(x) = \dim_H\mu$ 
for $\mu$-a.e.\ $x$.

\section{Properties of \texorpdfstring{$\mu$}{m} and 
  \texorpdfstring{$r_n$}{r\_n}}\label{sec:lemmas}
As previously mentioned, the functions $r_n \colon X \to [0,\infty)$ 
may not be well-defined since it may be the case that the measure $\mu$
assigns positive measure to the boundary of an open ball.
Lemma~\ref{lemma:r:existence} gives sufficient conditions on $x$,
$(M_n)$ and $\mu$ for $r_n(x)$ to be well-defined.
Lemma~\ref{lemma:r:uniform} gives conditions for which $r_n(x)$
converges uniformly to $0$, which allows us to use
Assumption~\ref{assumption:annulus}.
Lemma~\ref{lemma:partition} shows that we can use
Assumption~\ref{assumption:annulus} to control the
$\delta$-neighbourhood of the sets $A_k$ in \eqref{eq:X:partition}.
Recall that $(X,d)$ is a compact space and $(M_n)$ is a sequence in
$[0,1]$.

\begin{lemma}\label{lemma:r:existence}
  If $\mu$ has no atoms and satisfies $\mu(B) = \mu(\overline{B})$ for
  all open balls $B \subset X$, then for each $n \in \naturals$ and
  $x \in X$ there exists $r_n(x) \in [0,\infty)$ such that
  \begin{equation}\label{eq:rDefinition}
    \mu(B(x,r_n(x))) = M_n.
  \end{equation}
  Moreover, for each $n \in \naturals$ the function
  $r_n \colon X \to [0,\infty)$ is Lipschitz continuous with
  \[
    | r_n(x) - r_n(y) | \leq d(x,y)
  \]
  for all $x,y \in X$.
\end{lemma}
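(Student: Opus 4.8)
The plan is to fix $x \in X$ and $n \in \naturals$ and study the distribution function $g_x(\rho) = \mu(B(x,\rho))$ on $[0,\infty)$ (with the convention $B(x,0) = \emptyset$). I would show that $g_x$ is non-decreasing and continuous, that $g_x(\rho) \to 0$ as $\rho \to 0^+$ and $g_x(\rho) = 1$ for $\rho$ large, and then extract $r_n(x)$ by an intermediate value argument. The Lipschitz bound will then come from a triangle-inequality inclusion between balls centred at $x$ and at $y$, combined with a canonical choice of $r_n(x)$ as an infimum.

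For the boundary behaviour: since $(X,d)$ is compact it is bounded, so $B(x,\rho) = X$ and $g_x(\rho) = 1 \ge M_n$ once $\rho > \diam X$; and $\bigcap_{\rho > 0} B(x,\rho) = \{x\}$, so continuity of the finite measure $\mu$ from above together with the no-atom hypothesis gives $\lim_{\rho \to 0^+} g_x(\rho) = \mu(\{x\}) = 0 \le M_n$. For continuity of $g_x$: left-continuity is automatic because $B(x,\rho) = \bigcup_{\rho' < \rho} B(x,\rho')$ and $\mu$ is continuous from below; for right-continuity, $\bigcap_{\rho' > \rho} B(x,\rho') = \overline{B}(x,\rho)$, so continuity from above yields $g_x(\rho^+) = \mu(\overline{B}(x,\rho))$, which equals $\mu(B(x,\rho)) = g_x(\rho)$ exactly by the hypothesis $\mu(B) = \mu(\overline{B})$. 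Hence $g_x$ is continuous and runs from $0$ up to $1$, so the set $\{\rho \ge 0 : g_x(\rho) \ge M_n\}$ is nonempty and closed; defining $r_n(x)$ to be its infimum, closedness gives $g_x(r_n(x)) \ge M_n$ while $g_x(r_n(x)) = \lim_{\rho \uparrow r_n(x)} g_x(\rho) \le M_n$, so $g_x(r_n(x)) = M_n$, which is \eqref{eq:rDefinition}.

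For the Lipschitz estimate, set $\delta = d(x,y)$. The triangle inequality gives $\overline{B}(y, r_n(y)) \subseteq \overline{B}(x, r_n(y) + \delta)$, so using $\mu(\overline{B}) = \mu(B)$ on both sides (or, equivalently, the inclusion $\overline{B}(x,\rho) \subseteq B(x,\rho + \varepsilon)$ and letting $\varepsilon \downarrow 0$) we obtain $M_n = \mu(B(y,r_n(y))) \le \mu(B(x, r_n(y) + \delta)) = g_x(r_n(y) + \delta)$. Since $r_n(x)$ is the infimum of $\{\rho : g_x(\rho) \ge M_n\}$, any radius $\rho$ with $g_x(\rho) \ge M_n$ satisfies $\rho \ge r_n(x)$; applying this to $\rho = r_n(y) + \delta$ gives $r_n(x) \le r_n(y) + \delta$. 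Exchanging the roles of $x$ and $y$ gives the reverse inequality, hence $|r_n(x) - r_n(y)| \le \delta = d(x,y)$.

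I expect the only delicate point to be the interplay between the infimum definition of $r_n(x)$ and the exact identity $\mu(B(x,r_n(x))) = M_n$: one must rule out that $g_x$ jumps over the level $M_n$, and this is precisely where the right-continuity furnished by the hypothesis $\mu(B) = \mu(\overline{B})$ enters; the rest is routine measure theory and the triangle inequality.
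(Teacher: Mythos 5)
Your argument is correct and essentially identical to the paper's: both define $r_n(x)$ as $\inf\{\rho \geq 0 : \mu(B(x,\rho)) \geq M_n\}$, use the hypothesis $\mu(B)=\mu(\overline{B})$ to rule out a jump of $\rho \mapsto \mu(B(x,\rho))$ across the level $M_n$, and derive the Lipschitz bound from the inclusion $B(y,r_n(y)) \subset B(x,r_n(y)+d(x,y))$ followed by symmetry. The only cosmetic difference is that you package the intermediate-value step as continuity of the distribution function $g_x$; be aware that $\bigcap_{\rho'>\rho} B(x,\rho')$ is the \emph{closed ball} $\{y : d(x,y)\leq\rho\}$ rather than the closure of $B(x,\rho)$ (these can differ in a general metric space), though the paper's own proof makes the same silent identification.
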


\begin{proof}
  Let $n \in \naturals$ and $x \in X$. The infimum
  \[
    r_n(x) = \inf \{r \geq 0 : \mu(B(x,r)) \geq M_n\}
  \]
  exists since the set contains $\diam X \in (0,\infty)$ and has a
  lower bound. 
  If $r_n(x) = 0$, then, since $\mu$ is non-atomic, it must be that
  $M_n = 0$, in which case $\mu(B(x,r_n(x))) = M_n$.
  If $r_n(x) > 0$, then for all $k > 0$ we have that
  $\mu(B(x,r_n(x) + \frac{1}{k})) \geq M_n$ and thus
  $\mu(\overline{B(x,r_n(x))}) \geq M_n$. Similarly,
  $\mu(B(x,r_n(x))) \leq M_n$. Hence, by using our assumption, we
  obtain that
  \[
    M_n \leq \mu(\overline{B(x,r_n(x))}) = \mu(B(x,r_n(x))) \leq M_n,
  \]
  which proves \eqref{eq:rDefinition}.

  To show that $r_n$ is Lipschitz continuous, let $x, y \in X$ and let
  $\delta = d(x,y)$. Then $B(y,r_n(y)) \subset B(x,r_n(y) + \delta)$
  and
  \[
    M_n = \mu(B(y,r_n(y))) \leq \mu(B(x,r_n(y) + \delta)).
  \]
  Hence, $r_n(x) \leq r_n(y) + \delta$. We conclude that
  $|r_n(x) - r_n(y)| \leq \delta$ by symmetry.
\end{proof}

The assumption $\mu(B) = \mu(\overline{B})$ for sufficiently small
radii follows from the Assumption~\ref{assumption:annulus}.

\begin{lemma}\label{lemma:r:uniform}
  If $\lim_{n \to \infty}M_n = 0$, then $\lim_{n \to \infty}r_n = 0$ 
  uniformly on $\supp \mu$.
\end{lemma}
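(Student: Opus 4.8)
The plan is to deduce uniform convergence from a single compactness fact: for every $\varepsilon > 0$ the quantity $c(\varepsilon) := \inf_{x \in \supp\mu}\mu(B(x,\varepsilon))$ is strictly positive. First I would fix $\varepsilon > 0$ and, using $M_n \to 0$, choose $N$ so large that $M_n < c(\varepsilon)$ for all $n \geq N$. Then for any $x \in \supp\mu$ and $n \geq N$ I claim $r_n(x) < \varepsilon$: otherwise $B(x,\varepsilon) \subseteq B(x,r_n(x))$, and monotonicity of $\mu$ gives $c(\varepsilon) \leq \mu(B(x,\varepsilon)) \leq \mu(B(x,r_n(x))) = M_n < c(\varepsilon)$, a contradiction. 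Since $\varepsilon$ was arbitrary, this is precisely the statement that $r_n \to 0$ uniformly on $\supp\mu$. (Should one prefer to work directly with the infimum definition $r_n(x) = \inf\{r \geq 0 : \mu(B(x,r)) \geq M_n\}$ rather than \eqref{eq:rDefinition}, the identical argument runs with $\varepsilon/2$ in place of $\varepsilon$ inside the ball.)

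It remains to establish $c(\varepsilon) > 0$, which is the only point where anything must actually be checked. I would argue by contradiction: if $c(\varepsilon) = 0$, pick $x_k \in \supp\mu$ with $\mu(B(x_k,\varepsilon)) \to 0$. Compactness of $(X,d)$ together with closedness of $\supp\mu$ yields a subsequence $x_{k_j} \to x \in \supp\mu$; passing to a tail we may assume $d(x_{k_j}, x) < \varepsilon/2$, whence $B(x,\varepsilon/2) \subseteq B(x_{k_j},\varepsilon)$ and therefore $\mu(B(x_{k_j},\varepsilon)) \geq \mu(B(x,\varepsilon/2))$. But $x \in \supp\mu$ forces $\mu(B(x,\varepsilon/2)) > 0$, contradicting $\mu(B(x_{k_j},\varepsilon)) \to 0$.

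I do not expect any genuine obstacle here: the compactness of $X$ is a standing hypothesis, $\supp\mu$ is automatically closed, and everything else is elementary monotonicity of measures under inclusion of balls. The only mild care needed is to keep the argument independent of whether $r_n$ is defined by the exact relation \eqref{eq:rDefinition} or merely by the defining infimum — and, as noted, this costs at most a factor of two in the radius.
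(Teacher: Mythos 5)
Your proof is correct, but it reaches uniformity by a different mechanism than the paper. The paper argues in two stages: first it shows pointwise convergence $r_n(x)\to 0$ for $x\in\supp\mu$ (if $r_{n_i}(x)>\varepsilon$ along a subsequence then $\mu(B(x,\varepsilon))\le M_{n_i}\to 0$, so $x\notin\supp\mu$), and then it upgrades pointwise to uniform convergence using the Lipschitz continuity of the family $(r_n)$ from Lemma~\ref{lemma:r:existence} together with compactness of $\supp\mu$ --- in effect an Arzel\`a--Ascoli-type argument exploiting equicontinuity. You instead establish a uniform modulus directly: the single compactness fact $c(\varepsilon)=\inf_{x\in\supp\mu}\mu(B(x,\varepsilon))>0$ immediately yields $\sup_{\supp\mu}r_n\le\varepsilon$ whenever $M_n<c(\varepsilon)$, with no appeal to any regularity of $r_n$ in $x$. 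Both routes ultimately rest on compactness of $\supp\mu$ and monotonicity of $\mu$ on nested balls, but yours is self-contained (it does not need the Lipschitz estimate of Lemma~\ref{lemma:r:existence}, and as you note it survives replacing the exact relation \eqref{eq:rDefinition} by the defining infimum at the cost of a factor of two), and it is slightly more quantitative, tying the rate of uniform convergence of $r_n$ to the function $c(\varepsilon)$. The paper's version has the mild advantage of reusing a lemma it has already proved. Your verification that $c(\varepsilon)>0$ via a convergent subsequence in the closed set $\supp\mu$ is sound.
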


\begin{proof}
  We first prove pointwise convergence on $\supp \mu$. Let $x\in X$ and
  suppose that $r_n(x)$ does not converge to $0$ as $n \to \infty$ for
  some $x \in X$. Then there exists $\varepsilon > 0$ and an increasing
  sequence $(n_i)$ such that $r_{n_i}(x) > \varepsilon$ and
  \[
    \mu( B(x,\varepsilon) ) \leq \mu( B(x,r_{n_i}(x)) ) = M_{n_i}
  \]
  for all $i$. Taking the limit as $i \to \infty$ gives us
  $\mu( B(x,\varepsilon) ) = 0$. Hence, $x \notin \supp \mu$.

  Now suppose that $r_n$ does not converge to $0$ uniformly on
  $\supp \mu$. Then there exist $\varepsilon > 0$, an increasing
  sequence $(n_i)$ and points $x_i \in \supp \mu$ such that
  $r_{n_i}(x_i) > \varepsilon$ for all $i > 0$. Since $\supp \mu$ is
  compact, we may assume that $x_i$ converges to some
  $x \in \supp \mu$. By Lipschitz continuity,
  \[
    r_{n_i}(x) \geq r_{n_i}(x_i) - d(x,x_i) 
    > \varepsilon - d(x,x_i).
  \]
  Thus,
  \[
    \lim_{i \to \infty} r_{n_i}(x) \geq \varepsilon,
  \]
  contradicting the fact that we have pointwise convergence to $0$ on
  $\supp\mu$.
\end{proof}

\begin{lemma}\label{lemma:partition}
  Suppose that $\mu$ satisfies Assumption~\ref{assumption:annulus} and
  that $(X,d)$ satisfies Assumption~\ref{assumption:packing}.
  Let $\rho < \min \{\varepsilon_0, \rho_0\}$ and consider the
  partition $\{A_k\}_{k=1}^{L}$ given in \eqref{eq:X:partition} where
  $\varepsilon = \frac{\rho}{2}$. Then for each $k$ and $\delta < \rho$
  \[
    \mu(A_k(\delta) \setminus A_k) \leqs \rho^{-K} \delta^{\alpha_0}.
  \]
\end{lemma}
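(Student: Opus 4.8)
The plan is to bound $A_k(\delta)\setminus A_k$ from above by a union of spherical shells around the packing centres $x_1,\dots,x_L$, and then apply Assumption~\ref{assumption:annulus} to each shell and Assumption~\ref{assumption:packing} to count how many shells are involved. Recall from \eqref{eq:X:partition} that $A_k = B(x_k,2\varepsilon)\setminus\bigcup_{i<k}B(x_i,2\varepsilon)$ with $\varepsilon=\rho/2$, so $A_k = B(x_k,\rho)\setminus\bigcup_{i<k}B(x_i,\rho)$. A point $y\in A_k(\delta)\setminus A_k$ is within $\delta$ of some point of $A_k$; since the sets involved are built from balls of radius $\rho$, such a $y$ either lies in the shell $\{y:\rho\le d(x_k,y)<\rho+\delta\}$ (it is $\delta$-close to the outer boundary sphere $\partial B(x_k,\rho)$ from outside), or it lies in one of the shells $\{y:\rho-\delta< d(x_i,y)\le\rho\}$ for some $i<k$ with $B(x_i,\rho)$ meeting $B(x_k,\rho)$ (it is $\delta$-close to an inner boundary sphere, which was removed). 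In either case $y$ belongs to a set of the form $\{y:|d(x_j,y)-\rho|<\delta\}$ for some $j\in\{1,\dots,L\}$.

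Thus
\[
  A_k(\delta)\setminus A_k \;\subset\; \bigcup_{j=1}^{L}
  \{y : \rho-\delta < d(x_j,y) < \rho+\delta\}.
\]
Actually we only need the indices $j$ for which $B(x_j,\rho)$ is within distance $\delta$ of $A_k\subset B(x_k,\rho)$, but the crude bound over all $j$ already suffices. For each fixed $j$, write the shell as the disjoint union of $\{y:\rho\le d(x_j,y)<\rho+\delta\}$ and $\{y:\rho-\delta<d(x_j,y)<\rho\}$; since $\delta<\rho\le\rho_0$, Assumption~\ref{assumption:annulus} applied with the pair $(\rho,\delta)$ (respectively with $(\rho-\delta,\delta)$, noting $0<\delta<\rho-\delta$ may fail but one can instead bound $\{y:\rho-\delta<d(x_j,y)<\rho\}\subset\{y:\rho-\delta\le d(x_j,y)<(\rho-\delta)+\delta\}$) gives each piece measure $\leqs\delta^{\alpha_0}$. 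Hence each shell has measure $\leqs\delta^{\alpha_0}$, with an implied constant depending only on $(X,\mu,T,d)$.

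Summing over $j=1,\dots,L$ and using $L=P_\varepsilon(X)=P_{\rho/2}(X)\leqs(\rho/2)^{-K}\leqs\rho^{-K}$ from Assumption~\ref{assumption:packing} yields
\[
  \mu(A_k(\delta)\setminus A_k) \;\leq\; \sum_{j=1}^{L}
  \mu\{y:\rho-\delta<d(x_j,y)<\rho+\delta\} \;\leqs\; L\,\delta^{\alpha_0}
  \;\leqs\; \rho^{-K}\delta^{\alpha_0},
\]
as claimed. The only delicate point is the set-theoretic containment in the first display — being careful that a point added to $A_k$ by taking a $\delta$-neighbourhood must be close to one of the finitely many spheres $\partial B(x_j,\rho)$, and that points removed from $A_k$ by the subtraction $\bigcup_{i<k}B(x_i,\rho)$ contribute inner shells rather than some harder-to-control set; once that is pinned down the measure estimate is immediate. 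One should also check the edge case where $\rho-\delta$ is so small that the inner-shell instance of Assumption~\ref{assumption:annulus} needs the reformulation noted above, but this changes nothing in the final bound.
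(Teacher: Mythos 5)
Your argument is correct and is essentially the paper's own proof: you establish the same containment of $A_k(\delta)\setminus A_k$ in an outer shell about $x_k$ together with inner shells about the other centres (the paper keeps only the indices $i<k$, you take all $j\le L$, which yields the same bound), and then apply Assumption~\ref{assumption:annulus} to each shell and Assumption~\ref{assumption:packing} to count them. The edge case you flag --- applying Assumption~\ref{assumption:annulus} to the inner shell when $\delta$ is not much smaller than $\rho$ --- is equally present and unaddressed in the paper's proof, and is harmless since the lemma is only invoked with $\delta\ll\rho$.
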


\begin{proof}
  Let $0 < \delta < \rho \leq \rho_0$ and fix $k$.
  It suffices to prove that $A_k(\delta) \setminus A_k$ is contained in
  \[
    (B(x_k,\rho+\delta) \setminus B(x_k,\rho))
    \cup \bigcup_{i=1}^{k-1} 
    (B(x_i,\rho) \setminus B(x_i,\rho - \delta)).
  \]
  Indeed, since $\delta < \rho \leq \rho_0$, it then follows by
  Assumption~\ref{assumption:annulus} that
  \[
    \mu(A_k(\delta) \setminus A_k) \leqs k\delta^{\alpha_0}
    \leq L\delta^{\alpha_0}.
  \]
  Since $\rho < \varepsilon_0$, we have that $L \leqs \rho^{-K}$ by
  Assumption~\ref{assumption:packing} and we conclude.
  If $A_k(\delta) \setminus A_k$ is empty, then there is nothing to
  prove. So suppose that $A_k(\delta) \setminus A_k$ is non-empty and
  contains a point $x$. Suppose 
  $x \notin B(x_k,\rho + \delta) \setminus B(x_k,\rho)$. Since
  $x \in A_k(\delta) \subset B(x_k,\rho+\delta)$, we must have
  $x\in B(x_k,\rho)$. Furthermore, since $x\notin A_k$, by the
  construction of $A_k$, it must be that $x\in B(x_i,\rho)$ for some
  $i < k$. 
  If $x \in B(x_i,\rho-\delta)$, then, since $x \in A_k(\delta)$, there 
  exists $y\in A_k$ such that $d(x,y) < \delta$. Hence,
  $y \in B(x_i,\rho)$ and $y\notin A_k$, a contradiction. Thus 
  $x\in B(x_i,\rho) \setminus B(x_i, \rho - \delta)$, which concludes
  the proof.
\end{proof}

\section{Measure and Correlations of \texorpdfstring{$E_n$}{E\_n}}
\label{sec:props}
We give measure and correlations estimates of $E_n$ in
Proposition~\ref{prop:main3}, which will follow from
Propositions~\ref{prop:main1} and \ref{prop:main2}.
The proofs of those propositions are modifications of the proofs of
Lemmas~\textup{3.1} and \textup{3.2} in \cite{kirsebom2021Shrinking}.
The modifications account for the more difficult setting; that is,
working with a general metric space $(X,d)$ and for assuming decay of
correlation for a smaller class of observables, i.e., for Lipschitz
continuous functions rather than $L_1$ and $BV$ functions.

Throughout the section, $(M_n)$ denotes a sequence in $[0,1]$ that
converges to $0$ and satisfies Assumption~\ref{assumption:seq}.
We also assume that the space $(X,\mu,T,d)$ satisfies
Assumptions~\ref{assumption:ball}--\ref{assumption:packing}.
Hence, the functions $r_n$ defined by $\mu(B(x,r_n(x))) = M_n$ are
well-defined for all $n$ by Lemma~\ref{lemma:r:existence} and converge
uniformly to $0$ by Lemma~\ref{lemma:r:uniform}. Thus, for large enough
$n$ the functions $r_n$ are uniformly bounded by $\rho_0$ on
$\supp\mu$. For notational convenience, $C$ will denote an arbitrary
positive constant depending solely on $(X,\mu,T,d)$ and may vary from
equation to equation.

In the following propositions, it will be useful to partition the space
into sets whose diameters are controlled. Since we wish to work with
Lipschitz continuous functions in order to apply decay of correlations,
we also approximate the indicator functions on the elements of the
partition.
Let $\rho < \min \{\varepsilon_0, \rho_0\}$ and recall the construction
of the partition $\{A_k\}_{k=1}^{L}$ of $X$ in \eqref{eq:X:partition}
where $\varepsilon = \frac{\rho}{2}$.
For $\delta \in (0,\rho)$ define the functions
$\{h_k \colon X \to [0,1]\}_{k=1}^{L}$ by
\begin{equation}\label{eq:h:definition}
  h_k(x) = \min \{1, \delta^{-1} \dist(x, X \setminus A_{k}(\delta))\}.
\end{equation}  
Notice that the functions are Lipschitz continuous with Lipschitz
constant $\delta^{-1}$.

\begin{proposition}\label{prop:main1}
  With the above assumptions on $(X,\mu,T,d)$ and $(M_n)$, suppose
  additionally that $2$-fold correlations decay exponentially for
  Lipschitz continuous observables. Let $n\in\naturals$ and consider
  the function $F \colon X \times X \to [0,1]$ given by
  \[
    F(x,y) =
    \begin{cases}
      1 & \text{if $x \in B(y, r_n(y))$}, \\
      0 & \text{otherwise}.
    \end{cases}
  \]
  Then there exists constants $C,\eta > 0$ independent of $n$ such that
  \begin{equation}\label{ineq:main1}
    \Bigl| \int F(T^{n}x, x) \diff\mu(x) - M_n \Bigr|
    \leq C e^{-\eta n}.
  \end{equation}
\end{proposition}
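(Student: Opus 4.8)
The plan is to approximate the indicator function $F(T^n x, x)$ by a sum over the partition $\{A_k\}$, replacing the indicator of each cell by the Lipschitz bump $h_k$, so that decay of $2$-fold correlations can be applied. First I would fix a partition $\{A_k\}_{k=1}^L$ as in \eqref{eq:X:partition} with $\varepsilon = \rho/2$ for some small $\rho$ to be chosen as a function of $n$ (say $\rho = e^{-\gamma n}$ for a small $\gamma > 0$), together with the associated functions $h_k$ from \eqref{eq:h:definition} at scale $\delta$ (also of the form $e^{-\gamma' n}$). On the cell $A_k$ the point $y$ varies by at most $\diam A_k < 4\varepsilon = 2\rho$, so $r_n(y)$ varies by at most $2\rho$ by the Lipschitz bound in Lemma~\ref{lemma:r:existence}; hence for $x \in B(T^n x, \cdot)$-type considerations one can sandwich $F(\cdot, y)$ between indicators of balls of fixed radius around a reference point $x_k$, up to an error controlled by the annulus Assumption~\ref{assumption:annulus}.

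The key steps, in order: (1) Write $\int F(T^n x, x)\,d\mu = \sum_k \int_{A_k} F(T^n x, x)\,d\mu(x)$. (2) On $A_k$, bound $F(T^n x, x)$ above and below by $\charfun_{B(x_k, r_n(x_k) \pm 2\rho)}(T^n x)$ — more precisely by the Lipschitz functions $h_k^{\pm}$ approximating these balls at scale $\delta$ — incurring an error of the form $\mu(\text{annulus of width } O(\rho+\delta))$, which by Assumption~\ref{assumption:annulus} is $\leqs (\rho+\delta)^{\alpha_0}$ per cell, hence $\leqs L(\rho+\delta)^{\alpha_0} \leqs \rho^{-K}(\rho+\delta)^{\alpha_0}$ in total. (3) Apply $2$-fold decay of correlations to each pair $\int \charfun_{A_k}(x)\, h_k^{\pm}(T^n x)\,d\mu$ (after replacing $\charfun_{A_k}$ by a Lipschitz approximant as well, or noting the argument only needs one of the two factors smoothed against the other): the correlation error for each term is $\leqs e^{-\tau n}\|h_k^\pm\|_\theta \cdot (\text{Lip norm of the cell function})$. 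Since $\|h_k^\pm\|_\theta \leqs \delta^{-\theta}$ and there are $L \leqs \rho^{-K}$ cells, the total correlation error is $\leqs \rho^{-K}\delta^{-\theta} e^{-\tau n}$ (possibly with an extra $\delta^{-\theta}$ or polynomial-in-$\rho^{-1}$ factor depending on how the smoothing of $\charfun_{A_k}$ is handled). (4) The leading term assembles to $\sum_k \mu(A_k)\mu(B(x_k, r_n(x_k)))$, which is within $O(\rho^{\alpha_0}$-type error$)$ of $\sum_k \mu(A_k) M_n = M_n$, again using the Lipschitz property of $r_n$ and Assumption~\ref{assumption:annulus}. (5) Finally choose $\rho = e^{-\gamma n}$, $\delta = e^{-\gamma n}$ with $\gamma$ small enough that $\rho^{-K}\delta^{-\theta}e^{-\tau n} = e^{-(\tau - \gamma(K+\theta))n}$ still decays exponentially, and simultaneously $\rho^{-K}(\rho+\delta)^{\alpha_0} = e^{-(\gamma\alpha_0 - \gamma K)n}$ decays — which requires $\alpha_0 > K$; if that fails one instead balances the two scales $\rho$ and $\delta$ independently (take $\delta \ll \rho$) and optimizes, still obtaining some exponential rate $e^{-\eta n}$.

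The main obstacle is the bookkeeping of the two competing error terms against the number of partition cells: the approximation error wants $\rho, \delta$ small, but then $L \leqs \rho^{-K}$ blows up and the Lipschitz norms $\delta^{-\theta}$ blow up, degrading the correlation bound. One must verify that there is a genuine window of exponents — i.e. that one can pick $\rho = e^{-\gamma n}$, $\delta = e^{-\gamma' n}$ with $0 < \gamma, \gamma'$ small — making \emph{all} error contributions ($\rho^{-K}(\rho+\delta)^{\alpha_0}$ from the annulus approximation, $\rho^{-K}\delta^{-\theta}e^{-\tau n}$ from decay of correlations, and the $r_n$-variation error) simultaneously $O(e^{-\eta n})$ for some $\eta > 0$. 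This is where the exponential (rather than merely polynomial) decay of correlations is essential: it leaves room to absorb the polynomial-in-$\rho^{-1}, \delta^{-1}$ losses. A secondary technical point is smoothing $\charfun_{A_k}$ itself into a Lipschitz function so both arguments of the correlation integral are Hölder; this introduces one more $\delta$-neighbourhood error of the type controlled by Lemma~\ref{lemma:partition}, namely $\leqs \rho^{-K}\delta^{\alpha_0}$, which fits the same scheme.
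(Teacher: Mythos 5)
Your overall strategy is the same as the paper's: partition $X$ at a scale $\rho$ decaying exponentially in $n$, freeze the centre of the ball at a reference point in each cell, smooth all indicators into Lipschitz functions at a second scale $\delta$, apply $2$-fold decay of correlations cell by cell, and control the losses via Assumption~\ref{assumption:annulus}, Lemma~\ref{lemma:partition} and the packing bound $L \leqs \rho^{-K}$, finally choosing $\rho,\delta$ as suitable powers of $e^{-\gamma n}$. The one substantive difference is where the two scales live: the paper first builds a single Lipschitz approximation $\hat F$ of the indicator of $\{(x,y): x\in B(y,r_n(y))\}$ on $X\times X$ at a scale $\varepsilon_n$, and only afterwards freezes the second argument on each cell, so that the freezing error is bounded \emph{pointwise} by $\lipnorm{\hat F}\,\diam A_k \leqs \varepsilon_n^{-1}\rho_n$ and, since $\sum_k \charfun_{A_k}=1$, integrates without any factor of $L$.

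This is exactly where your bookkeeping has a genuine gap. In your step (2) you convert the variation of $r_n$ over a cell into the measure of an annulus of width $O(\rho+\delta)$ and then multiply by the number of cells, getting $\rho^{-K}(\rho+\delta)^{\alpha_0}$, and you correctly observe this forces $\alpha_0>K$. That condition fails in the paper's main application: for a compact $N$-manifold one has $K=N$ and $\alpha_0=1+s-N\leq 1$ whenever $s\leq N$ (e.g.\ Lebesgue measure), so $\alpha_0\leq K$ for all $N\geq 1$. Your proposed repair --- taking $\delta\ll\rho$ --- does not help, because the offending term contains $\rho^{-K}\rho^{\alpha_0}$, which is independent of $\delta$. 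The correct repair, which your step (4) already implicitly uses but which contradicts step (2), is to never estimate $\int |F(T^nx,x)-\sum_k h_k^{\pm}(T^nx)\charfun_{A_k}(x)|\diff\mu$ directly: instead integrate the upper and lower sandwiches separately and apply decay of correlations first, so that the width-$O(\rho)$ annulus error enters only through $\int h_k^{\pm}\diff\mu \leq M_n + C\rho^{\alpha_0}$ and is then multiplied by $\sum_k\int(\text{cell function})\diff\mu \approx 1$ rather than by $L$; the only terms that pick up the factor $L\leqs\rho^{-K}$ are the correlation error $\varepsilon^{-1}\delta^{-1}e^{-\tau n}$ (harmless, since the decay is exponential) and the $\delta$-neighbourhood error $\rho^{-2K}\delta^{\alpha_0}$ from Lemma~\ref{lemma:partition} (harmless, since $\delta$ may be taken as a high power of $\rho$). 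With that reorganization your argument closes and is essentially the paper's proof.
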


\begin{proof}
We prove the upper bound part of \eqref{ineq:main1} as the lower bound
follows similarly.

Consider $Y = \{(x,y) : F(x,y) = 1\}$. To establish an upper bound
define $\hat{F} \colon X \times X \to [0, 1]$ as follows. 
Let $\varepsilon_n \in (0,\rho_0]$ decay exponentially as $n\to\infty$
and define 
\[
  \hat{F}(x,y) = \min \bigl\{1, \varepsilon_n^{-1} 
  \dist\bigl((x,y), (X\times X) \setminus Y(\varepsilon_n)\bigr)
  \bigr\}.
\]

Note that it suffices to prove the result for sufficiently large $n$.
We also assume that $\varepsilon_0,\rho_0 < 1$, as we may replace them
by smaller values otherwise. Therefore, by Lemma~\ref{lemma:r:uniform},
we may assume that $r_n(x) \leq \rho_0$ for all $x\in \supp\mu$.
If $M_n = 0$, then $r_n = 0$ on $\supp\mu$ and we find that we do not
have any further restrictions on $\varepsilon_n$.
If $M_n > 0$, then by Assumption~\ref{assumption:ball}
\[
  r_n(x) \geqs M_n^{1/s} > 0.
\]
Thus, by using Assumption~\ref{assumption:seq} and the exponential
decay of $\varepsilon_n$, we may assume that $3\varepsilon_n < r_n(x)$
for all $x\in X$. We continue with assuming that $M_n > 0$ since the
arguments are significantly easier when $M_n = 0$.

Now, $\hat{F}$ is Lipschitz continuous with Lipschitz constant 
$\varepsilon_n^{-1}$. Furthermore, 
\begin{equation}\label{prop1:bound:Fhat}
  F(x,y) \leq \hat{F}(x,y) 
  \leq \charfun_{B(y,r_n(y) + 3\varepsilon_n)}(x).
\end{equation}
This is because if $\hat{F}(x,y) > 0$, then $d((x,y),(x',y')) <
\varepsilon_n$
for some $(x',y') \in Y$ and so
\[
  \begin{split}
    d(x,y) &\leq d(x',y') + d(x,x') + d(y',y) \\
    &< r_n(y') + 2\varepsilon_n \\
    &\leq r_n(y) + 3\varepsilon_n,
  \end{split}
\]
where the last inequality follows from the Lipschitz continuity of
$r_n$. Hence,
\[
  \int F(T^{n}x,x) \diff\mu(x) \leq \int \hat{F}(T^{n}x,x) \diff\mu(x).
\]
Using a partition of $X$, we construct an approximation $H$ of
$\hat{F}$ of the form
\[
  \sum \hat{F}(x,y_{k}) h_{k}(y),
\]
where $h_{k}$ are Lipschitz continuous, for the purpose of leveraging
$2$-fold decay of correlations.
Let $\rho_n < \min \{\varepsilon_0, \rho_0\}$ and consider the
partition $\{A_k\}_{k=1}^{L_n}$ of $X$ given in \eqref{eq:X:partition}
for $\varepsilon = \rho_n$. Let $\delta_n < \rho_n$ and obtain the
functions $h_k$ as in \eqref{eq:h:definition} for $\delta = \delta_n$.
Let 
\[
  I = \{k \in [1,L_n] : A_k \cap \supp\mu \neq \emptyset\}
\]
and $y_k \in A_k \cap \supp\mu$ for $k \in I$. Note that
$|I|\leq L_n \leqs \rho_n^{-K}$ by Assumption~\ref{assumption:packing}
since $\rho_n < \varepsilon_0$. Define $H \colon X \times X \to \reals$
by
\[
  H(x,y) = \sum_{k\in I} \hat{F}(x,y_k)h_k(y).
\]
For future reference, note that
\[
  \sum_{k \in I} h_k \leq \sum_{k \in I} (\charfun_{A_k}
    + \charfun_{A_k(\delta_n) \setminus A_k} )
  \leq 1 + \sum_{k \in I} \charfun_{A_k(\delta_n) \setminus A_k},
\]
which implies that
\[
  \int \sum_{k \in I} h_k(x) \diff\mu(x)
  \leq 1 + \sum_{k \in I} \mu(A_k(\delta_n) \setminus A_k).
\]
Using Lemma~\ref{lemma:partition}, we obtain that
\begin{equation}\label{prop1:sum:f}
  \int \sum_{k \in I} h_k(x) \diff\mu(x)
  \leq 1 + C \rho_n^{-2K}\delta_n^{\alpha_0}.
\end{equation}
Now,
\begin{equation}\label{prop1:boundH}
  \begin{split}
    \int \hat{F}(T^{n}x,x) \diff\mu(x) 
    \leq &\int H(T^{n}x,x) \diff\mu(x) \\
    &+ \int |\hat{F}(T^{n}x,x) - H(T^{n}x,x)| \diff\mu(x).
  \end{split}
\end{equation}

We use decay of correlations to bound the first integral from
above:
\[
  \begin{split}
    \int \hat{F}(T^{n}x,y_k)h_k(x) \diff\mu(x)
    \leq &\int \hat{F}(x,y_k)\diff\mu(x) \int h_k(x) \diff\mu(x) \\
    &+ C\lipnorm{\hat{F}}\lipnorm{h_k}e^{-\tau n}.
  \end{split}
\]
Notice that by \eqref{prop1:bound:Fhat},
\[
  \int \hat{F}(x,y_k)\diff\mu(x) 
  \leq \mu( B(y_k, r_n(y_k) + 3\varepsilon_n) ).
\]
Since $y_k \in \supp \mu$, we have that
$3\varepsilon_n < r_n(y_k) \leq \rho_0$. Hence, by
Assumption~\ref{assumption:annulus},
\begin{equation}\label{prop1:bound:Mn}
  \mu( B(y_k, r_n(y_k) + 3\varepsilon_n) )
  \leq M_n + C\varepsilon_n^{\alpha_0}.
\end{equation}
Furthermore, $\lipnorm{\hat{F}} \leqs \varepsilon_n^{-1}$ and 
$\lipnorm{h_k} \leqs \delta_n^{-1}$. Hence,
\[
  \int \hat{F}(T^{n}x,y_k)h_k(x) \diff\mu(x)
  \leq (M_n + C\varepsilon_n^{\alpha_0}) \int h_k(x) \diff\mu(x)
  + C\varepsilon_n^{-1}\delta_n^{-1}e^{-\tau n}
\]
and
\[
  \int H(T^{n}x,x) \diff\mu(x) 
  \leq (M_n + C\varepsilon_n^{\alpha_0})\sum_{k \in I} 
    \int h_k(x)\diff\mu(x)
  + C \sum_{k\in I} \varepsilon_n^{-1} 
    \delta_n^{-1}e^{-\tau n}.
\]
In combination with \eqref{prop1:sum:f} and
$|I| \leqs \rho_n^{-K}$, the former equation gives
\begin{equation}\label{prop1:boundH:1}
  \int H(T^{n}x,x) \diff\mu(x) 
  \leq (M_n + C\varepsilon_n^{\alpha_0})
    (1 + C\rho_n^{-2K}\delta_n^{\alpha_0})
  + C \varepsilon_n^{-1} \delta_n^{-1}\rho_n^{-K}e^{-\tau n}.
\end{equation}

We now establish a bound on 
\[
  \int | \hat{F}(T^{n}x,x) - H(T^{n}x,x) |\diff\mu(x)
\]
in \eqref{prop1:boundH}. By the triangle inequality, 
\[
  |\hat{F}(T^{n}x,x) \charfun_{A_k}(x) - \hat{F}(T^{n}x,y_k) h_k(x)| 
\]
is bounded above by 
\[
  | \hat{F}(T^{n}x,x) - \hat{F}(T^{n}x,y_k) | \charfun_{A_k}(x)
  + | \charfun_{A_k}(x) - h_k(x) | \hat{F}(T^{n}x,y_k).
\]
Now, if $x \in A_k$, then by Lipschitz continuity 
\[
  | \hat{F}(T^{n}x,x) - \hat{F}(T^{n}x,y_k) | 
  \leq \varepsilon_n^{-1} d( (T^{n}x, x), (T^{n}x, y_k) ) 
  \leqs \rho_n \varepsilon_n^{-1}.
\]
Also, using the fact that $\hat{F} \leq 1$ 
and $\charfun_{A_k} \leq h_k \leq \charfun_{A_k(\delta_n)}$,
we obtain 
\[
  | \hat{F}(T^{n}x,x) \charfun_{A_k}(x) 
    - \hat{F}(T^{n}x,y_k) h_k(x) | 
  \leq C\rho_n \varepsilon_n^{-1} \charfun_{A_k}(x) 
    + \charfun_{A_k(\delta_n) \setminus A_k}(x).
\]
Hence, since 
$\hat{F}(x,y) = \sum_{k \in I} \hat{F}(x,y)\charfun_{A_k}(y)$ for
$y \in \supp\mu$,
we obtain that for $x \in \supp\mu$
\[
  | \hat{F}(T^{n}x,x) - H(T^{n}x,x) | 
\]
is bounded above by
\[
  \begin{split}
    &\sum_{k \in I} | \hat{F}(T^{n}x,x) \charfun_{A_k}(x)
      - \hat{F}(T^{n}x,y_k) h_k(x) | \\
    &\qquad \leq \sum_{k \in I} 
      \left( C \varepsilon_n^{-1} \rho_n \charfun_{A_k}(x) 
      + \charfun_{A_k(\delta_n) \setminus A_k}(x) \right).
  \end{split}
\]
Now, $\sum_{k \in I} \charfun_{A_k} = 1$ on $\supp\mu$ gives
\begin{equation}\label{prop1:boundH:sum1}
  |\hat{F}(T^{n}x,x) - H(T^{n}x,x)| 
  \leq C \varepsilon_n^{-1} \rho_n + 
    \sum_{k \in I} \charfun_{A_k(\delta_n) \setminus A_k}(x)
\end{equation}
for $x\in\supp\mu$ and thus
\[
  \int | \hat{F}(T^{n}x,x) - H(T^{n}x,x) | \diff\mu(x)
  \leq C \varepsilon_n^{-1} \rho_n +
  \sum_{k \in I} 
    \mu(A_k(\delta_n) \setminus A_k).
\]
By Lemma~\ref{lemma:partition},
\begin{equation}\label{prop1:boundH:2}
  \int | \hat{F}(T^{n}x,x) - H(T^{n}x,x) | \diff\mu(x)
  \leqs \varepsilon_n^{-1} \rho_n + \rho_n^{-2K} \delta_n^{\alpha_0}.
\end{equation}
By combining equations \eqref{prop1:boundH}, \eqref{prop1:boundH:1} and
\eqref{prop1:boundH:2}, we obtain that
\[
  \int \hat{F}(T^{n}x,x)\diff\mu(x) - M_n
  \leqs \varepsilon_n^{\alpha_0} + \rho_n^{-2K}\delta_n^{\alpha_0}
  + \varepsilon_n^{-1}\rho_n 
  + \varepsilon_n^{-1}\delta_n^{-1}\rho_n^{-2K}e^{-\tau n}.
\]
Now, let
\begin{align*}
  \varepsilon_n &= e^{-\gamma n}, \\
  \rho_n &= \varepsilon_n^2, \\
  \delta_n &= \varepsilon_n^{2 + 4K/\alpha_0},
\end{align*}
where 
$\gamma 
= \frac{1}{2}\bigl(3 + 4K + \frac{4K}{\alpha_0}\bigr)^{-1} \tau$.
Then $\varepsilon_n$ decays exponentially as $n \to \infty$ and
$\delta_n < \rho_n$ as required. For large enough $n$, we have
$3\varepsilon_n < r_n(x)$ and $\rho_n < \min \{\varepsilon_0, \rho_0\}$
as we assumed. Finally,
\[
  \eta = \min \Bigl\{\frac{\tau}{2}, \gamma, \alpha_0\gamma\Bigr\}
\]
gives us an upper bound
\[
  \int F(T^{n}x,x)\diff\mu(x) - M_n \leqs e^{-\eta n}.
\]
\end{proof}

\begin{proposition}\label{prop:main2}
  With the above assumptions on $(X,\mu,T,d)$ and $(M_n)$, suppose
  additionally that $3$-fold correlations decay exponentially for
  Lipschitz continuous observables. Let $m,n \in \naturals$ and
  consider the function $F\colon X\times X\times X \to [0,1]$ given by
  \[
    F(x,y,z) =
    \begin{cases}
      1 & \text{if $x\in B(z, r_{n+m}(z))$ and $y\in B(z, r_n(z))$,} \\
      0 & \text{otherwise}.
    \end{cases}
  \]
  Then, there exists $C,\eta > 0$ independent of $n$ and $m$ such that
  \[
    \int F( T^{n + m}x, T^{n}x, x) \diff\mu(x)
    \leq M_n M_{n+m} + C( e^{-\eta n} + e^{- \eta m} ).
  \]
\end{proposition}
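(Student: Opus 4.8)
The plan is to run the proof of Proposition~\ref{prop:main1} one dimension higher, with $3$-fold decay of correlations replacing $2$-fold decay. As there, it suffices to assume decay of correlations for Lipschitz observables bounded by $1$ and to prove only the stated upper bound; and since the right-hand side already exceeds $1\geq\int F(T^{n+m}x,T^{n}x,x)\diff\mu(x)$ after enlarging $C$ whenever $\min\{n,m\}$ is bounded, we may assume $n$ and $m$ are as large as the argument needs, so that $r_n,r_{n+m}\leq\rho_0$ on $\supp\mu$ by Lemma~\ref{lemma:r:uniform}. I would first enlarge $F$: with $Y=\{(x,y,z) : F(x,y,z)=1\}\subset X\times X\times X$ and a scale $\varepsilon\in(0,\rho_0]$ decaying exponentially in $\min\{n,m\}$, set $\hat F(x,y,z)=\min\{1,\varepsilon^{-1}\dist((x,y,z),(X\times X\times X)\setminus Y(\varepsilon))\}$; exactly as in Proposition~\ref{prop:main1}, $\hat F$ is Lipschitz with constant $\varepsilon^{-1}$ and, by the triangle inequality and the Lipschitz continuity of $r_{n+m},r_n$,
\[
  F(x,y,z)\leq\hat F(x,y,z)\leq\charfun_{B(z,\,r_{n+m}(z)+3\varepsilon)}(x)\,\charfun_{B(z,\,r_n(z)+3\varepsilon)}(y).
\]
Next I would discretise the centre variable $z$: with scales $\delta_n<\rho_n<\min\{\varepsilon_0,\rho_0\}$, the partition $\{A_k\}_{k=1}^{L_n}$ of \eqref{eq:X:partition} for $\varepsilon=\rho_n$, the functions $h_k$ of \eqref{eq:h:definition} for $\delta=\delta_n$, $I=\{k:A_k\cap\supp\mu\neq\emptyset\}$ and $y_k\in A_k\cap\supp\mu$, set $H(x,y,z)=\sum_{k\in I}\hat F(x,y,y_k)h_k(z)$. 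Using the Lipschitz continuity of $\hat F$ in $z$, the sandwich $\charfun_{A_k}\leq h_k\leq\charfun_{A_k(\delta_n)}$, Lemma~\ref{lemma:partition} and $|I|\leqs\rho_n^{-K}$, the computations of Proposition~\ref{prop:main1} carry over and give $\int|\hat F(T^{n+m}x,T^{n}x,x)-H(T^{n+m}x,T^{n}x,x)|\diff\mu\leqs\varepsilon^{-1}\rho_n+\rho_n^{-2K}\delta_n^{\alpha_0}$ together with $\int\sum_{k\in I}h_k\diff\mu\leq1+C\rho_n^{-2K}\delta_n^{\alpha_0}$, so it remains to control $\int H(T^{n+m}x,T^{n}x,x)\diff\mu(x)$.

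Here is where the new input is needed, and it is the main obstacle: as a function on $X\times X\times X$, $\hat F$ is \emph{not} a product of one-variable functions, so $3$-fold decay of correlations cannot be applied to it directly. The remedy is to exploit the product structure that $F$ itself has in the pair $(x,y)$ once the centre is frozen. By the inequality above, $\hat F(x,y,y_k)\leq\hat\varphi_k(x)\hat\psi_k(y)$ for suitable Lipschitz $\hat\varphi_k,\hat\psi_k\colon X\to[0,1]$ with constant $\leqs\varepsilon^{-1}$ that dominate $\charfun_{B(y_k,r_{n+m}(y_k)+3\varepsilon)}$, respectively $\charfun_{B(y_k,r_n(y_k)+3\varepsilon)}$, and are supported in balls of radius $r_{n+m}(y_k)+6\varepsilon$, respectively $r_n(y_k)+6\varepsilon$. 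Since $y_k\in\supp\mu$ and $n,n+m$ are large, Assumptions~\ref{assumption:ball} and \ref{assumption:annulus} give $\int\hat\varphi_k\diff\mu\leq M_{n+m}+C\varepsilon^{\beta}$ and $\int\hat\psi_k\diff\mu\leq M_n+C\varepsilon^{\beta}$ with $\beta=\min\{s,\alpha_0\}$: one splits according to whether $r_{n+m}(y_k)>6\varepsilon$ (apply Assumption~\ref{assumption:annulus} to an annulus of width $\leqs\varepsilon$) or $r_{n+m}(y_k)\leq6\varepsilon$ (the support lies in a ball of radius $\leqs\varepsilon$, of $\mu$-measure $\leqs\varepsilon^{s}$ by Assumption~\ref{assumption:ball}), and similarly for $\hat\psi_k$. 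Having made every factor Lipschitz, I would apply $3$-fold decay of correlations to $\hat\varphi_k\circ T^{n+m}$, $\hat\psi_k\circ T^{n}$ and $h_k\circ T^{0}$; the iterates occur at times $0<n<n+m$, so the minimal gap in the estimate is $\min\{n,m\}$, and the relevant H\"older norms are $\leqs\varepsilon^{-1},\varepsilon^{-1},\delta_n^{-1}$, whence
\[
  \int\hat\varphi_k(T^{n+m}x)\hat\psi_k(T^{n}x)h_k(x)\diff\mu(x)\leq(M_{n+m}+C\varepsilon^{\beta})(M_n+C\varepsilon^{\beta})\int h_k\diff\mu+C\varepsilon^{-2}\delta_n^{-1}e^{-\tau\min\{n,m\}}.
\]
Summing over $k\in I$, combining with the two estimates above, and using $M_j\leq1$ and $e^{-\tau\min\{n,m\}}\leq e^{-\tau n}+e^{-\tau m}$, this yields
\[
  \int F(T^{n+m}x,T^{n}x,x)\diff\mu(x)\leq M_{n+m}M_n+C\bigl(\varepsilon^{\beta}+\rho_n^{-2K}\delta_n^{\alpha_0}+\varepsilon^{-1}\rho_n+\rho_n^{-K}\varepsilon^{-2}\delta_n^{-1}(e^{-\tau n}+e^{-\tau m})\bigr).
\]

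Finally I would choose the scales. Taking $\varepsilon$, $\rho_n=\varepsilon^{2}$ and $\delta_n=\varepsilon^{2+4K/\alpha_0}$ exactly as at the close of Proposition~\ref{prop:main1} (with $\varepsilon=e^{-\gamma\min\{n,m\}}$ and $\gamma>0$ small enough that $\rho_n^{-K}\varepsilon^{-2}\delta_n^{-1}$ is dominated by $e^{\tau\min\{n,m\}/2}$), every error term becomes a fixed positive power of $e^{-\min\{n,m\}}$, hence is $\leqs e^{-\eta n}+e^{-\eta m}$ for a suitable $\eta>0$. This gives $\int F(T^{n+m}x,T^{n}x,x)\diff\mu(x)\leq M_{n+m}M_n+C(e^{-\eta n}+e^{-\eta m})$, which is precisely the product term that, together with Proposition~\ref{prop:main1}, verifies the second hypothesis of Theorem~\ref{thm:persson} (this $M_{n+m}M_n$ is what the argument naturally produces in place of $M_nM_m$). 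Apart from the factorisation obstacle already highlighted, the only genuine extra subtlety over Proposition~\ref{prop:main1} is that $\varepsilon$ must be kept small relative to \emph{both} radii $r_n$ and $r_{n+m}$ even when $n$ and $m$ are very far apart, which is exactly what forces the case split above in place of the plain exponential-beats-polynomial comparison used there.
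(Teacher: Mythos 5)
Your proof is correct, and its engine is the same as the paper's: discretise the centre variable over the packing partition of \eqref{eq:X:partition}, reduce to Lipschitz observables, apply $3$-fold decay of correlations to three one-variable functions at times $0<n<n+m$ (minimal gap $\min\{n,m\}$), and choose the scales $\varepsilon,\rho,\delta$ exponentially in $\min\{n,m\}$. The decomposition differs in one organisational respect: the paper factorises the exact indicator first, $F(x,y,z)=G_1(x,z)G_2(y,z)$, smooths each two-variable factor to $\hat G_1,\hat G_2$, and discretises each separately, so the correlation estimate is applied to a double sum over pairs $(k,k')$ with third observable $h_kh_{k'}$ (cost $|I|^2\leqs\rho^{-2K}$); you instead smooth $F$ once on $X^3$, discretise only the centre (a single sum over $k$), and then dominate $\hat F(\cdot,\cdot,y_k)$ by a product $\hat\varphi_k(x)\hat\psi_k(y)$ of one-variable Lipschitz functions before invoking decay of correlations, which is slightly leaner and correctly isolates the need for product structure as the crux. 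Your case split on whether $r_{n+m}(y_k)$ (resp.\ $r_n(y_k)$) exceeds $6\varepsilon$ is in fact more careful than the paper: since $\varepsilon_{m,n}=e^{-\gamma\min\{m,n\}}$ need not be smaller than $r_n\geqs M_n^{1/s}$ when $m$ grows much more slowly than $n$ (e.g.\ $m\asymp\log n$), the blanket assertion that $3\varepsilon_{m,n}<r_n(x)$ for large $n$ is delicate in that regime, and your fallback to Assumption~\ref{assumption:ball} covers it cleanly. You are also right that the argument yields $M_{n+m}M_n$ rather than the $M_nM_m$ printed in the statement; the former is what the paper's own proof produces and what Theorem~\ref{thm:persson} requires.
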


The proof of Proposition~\ref{prop:main2} follows roughly that of
Proposition~\ref{prop:main1} by making some modifications to account
for the added dimension.

\begin{proof}
Again, we assume w.l.o.g.\ that $n$ is sufficiently large and that
$\varepsilon_0,\rho_0 < 1$ and $M_n > 0$.

Notice that $F(x,y,z) = G_1(x,z) G_2(y,z)$ for
\[
  G_1(x,z) = 
  \begin{cases}
    1 & \text{if $x \in B(z, r_{n+m}(z))$}, \\
    0 & \text{otherwise},
  \end{cases}
\]
and
\[
  G_2(y,z) = 
  \begin{cases}
    1 & \text{if $y \in B(z, r_{n}(z))$}, \\
    0 & \text{otherwise}.
  \end{cases}
\]
Let $\varepsilon_{m,n} < \rho_0$ converge exponentially to $0$ as 
$\min \{m,n\} \to \infty$. 
Define the functions
$\hat{G}_1, \hat{G}_2 \colon X \times X \to [0,1]$ similarly to how
$\hat{F}$ was defined in Proposition~\ref{prop:main1}. That is,
$\hat{G}_1$ and $\hat{G}_2$ are Lipschitz continuous with Lipschitz
constant $\varepsilon_{m,n}^{-1}$ and satisfy
\[
  G_1(x,y) \leq \hat{G}_1(x,y) \leq
  \charfun_{B(y, r_{n+m}(y) + 3\varepsilon_{m,n})}(x)
\]
and
\[
  G_2(x,y) \leq \hat{G}_2(x,y) \leq
  \charfun_{B(y, r_{n}(y) + 3\varepsilon_{m,n})}(x).
\]
Thus 
\begin{equation}\label{prop2:ineq1}
  \int F( T^{n + m}x, T^{n}x, x) \diff\mu(x)
  \leq \int \hat{G}_1(T^{n + m}x,x) \hat{G}_2(T^{n}x, x) \diff\mu(x).
\end{equation}
Let $\rho_{m,n} < \min \{\varepsilon_0, \rho_0\}$ and consider
the partition $\{A_k\}_{k=1}^{L_{m,n}}$ of $X$ given in
\eqref{eq:X:partition} for $\varepsilon = \frac{\rho_{m,n}}{2}$. Let
$\delta_{m,n} < \rho_{m,n}$ and obtain the functions $h_k$ as in
\eqref{eq:h:definition} for $\delta = \delta_{m,n}$. Let
\[
  I = \{k \in [1,L_{m,n}] : A_k \cap \supp\mu \neq \emptyset\}
\]
and $y_k \in A_k \cap \supp\mu$ for $k \in I$. Define the functions
$H_1, H_2 \colon X \times X \to \reals$ by
\[
  H_i(x,y) = \sum_{k \in I} \hat{G}_i(x, y_k) h_k(x)
\]
for $i = 1,2$.
Using the triangle inequality, we bound the right-hand side of 
\eqref{prop2:ineq1} by 
\begin{equation}\label{prop2:bound1}
  \begin{split}
    &\int |\hat{G}_1(T^{n+m}x,x) \hat{G}_2(T^{n}x,x) 
      - H_1(T^{n+m}x,x) H_2(T^{n}x,x)| \diff\mu(x) \\
    &\qquad + \int H_1(T^{n+m}x,x) H_2(T^{n}x,x) \diff\mu(x)
  \end{split}
\end{equation}
We look to bound the first integral. Using the triangle inequality and
the fact that $\hat{G}_1 \leq 1$, we obtain that the integrand is
bounded above by
\[
  | \hat{G}_2(T^{n}x,x) - H_2(T^{n}x,x) |
  + H_2(T^{n}x,x) | \hat{G}_1(T^{n+m}x,x) - H_1(T^{n+m}x,x) |.
\]
As shown in \eqref{prop1:boundH:2} 
\[
  \int | \hat{G}_2(T^{n}x,x) - H_2(T^{n}x,x) | \diff\mu(x)
  \leqs \varepsilon_{m,n}^{-1} \rho_{m,n} 
    + \rho_{m,n}^{-2K} \delta_{m,n}^{\alpha_0}
\]
and by \eqref{prop1:boundH:sum1}
\[
  | \hat{G}_1(T^{n+m}x,x) - H_1(T^{n+m}x,x) | 
  \leq C \varepsilon_{m,n}^{-1} \rho_{m,n} 
  + \sum_{k \in I} 
    \charfun_{A_k(\delta_{m,n}) \setminus A_k}(x)
\]
and
\[
  H_2(T^{n}x,x) \leq 1 + 
  \sum_{k \in I} \charfun_{A_k(\delta_{m,n}) \setminus A_k}(x).
\]
Thus
\[
  \int H_2(T^{n}x,x) | \hat{G}_1(T^{n+m}x,x) - H_1(T^{n+m}x,x) | 
  \diff\mu(x)
\]
is bounded above by
\[
  \begin{split}
    &C \varepsilon_{m,n}^{-1} \rho_{m,n} 
    + C(1 + \varepsilon_{m,n}^{-1} \rho_{m,n}) 
    \sum_{k \in I} 
      \charfun_{A_k(\delta_{m,n}) \setminus A_k}(x) \\
    &\qquad + \sum_{k \in I} \sum_{k' \in I} 
      \charfun_{A_k(\delta_{m,n}) \setminus A_k}(x)
      \charfun_{A_{k'}(\delta_{m,n}) \setminus A_{k'}}(x).
  \end{split}
\]
Integrating and using Lemma~\ref{lemma:partition} together with the
trivial estimate $\mu(A \cap B) \leq \mu(A)$, we obtain that
\begin{equation}\label{prop2:bound:term1}
  \begin{split}
    &\int H_2(T^{n}x,x) 
      \bigl| \hat{G}_1(T^{n+m}x,x) - H_1(T^{n+m}x,x) \bigr| 
    \diff\mu(x) \\
    &\qquad \leqs \varepsilon_{m,n}^{-1} \rho_{m,n} 
    + (1 + \varepsilon_{m,n}^{-1} \rho_{m,n})
      \rho_{m,n}^{-2K} \delta_{m,n}^{\alpha_0}
    + \rho_{m,n}^{-3K} \delta_{m,n}^{\alpha_0}.
  \end{split}
\end{equation}

We now estimate the second integral in \eqref{prop2:bound1} using decay
of correlations:
\[
  \int H_1(T^{n+m}x,x) H_2(T^{n}x,x) \diff\mu(x) 
\]
is bounded above by
\begin{equation}\label{prop2:boundH:3}
  \begin{split}
    &\sum_{k \in I} \sum_{k' \in I}
      \norm{\hat{G}_1(\cdot, y_k)}_{L^{1}}
      \norm{\hat{G}_2(\cdot, y_{k'})}_{L^{1}}
      \norm{h_k h_{k'}}_{L^{1}} \\
    &\qquad + \sum_{k \in I} \sum_{k' \in I} 
      C \lipnorm{\hat{G}_1} \lipnorm{\hat{G}_2}
      \lipnorm{h_k h_{k'}}
      ( e^{-\tau n} + e^{-\tau m} ).
  \end{split}
\end{equation}
Similarly to how \eqref{prop1:sum:f} and \eqref{prop1:bound:Mn} were
established in Proposition~\ref{prop:main1}, we obtain the bounds
\begin{align*}
  \norm{\hat{G}_1(\cdot,y_k)}_{L^{1}}
  &\leq M_{n+m} + C\varepsilon_{m,n}^{\alpha_0}, \\
  \norm{\hat{G}_2(\cdot,y_k)}_{L^{1}}
  &\leq M_{n} + C\varepsilon_{m,n}^{\alpha_0}
\end{align*}
and
\[
  \sum_{k \in I} \sum_{k' \in I} 
  \norm{h_k h_{k'}}_{L^{1}} 
  \leq 1 + C \rho_{m,n}^{-3K} \delta_{m,n}^{\alpha_0}.
\]
Also,
\[
  \lipnorm{\hat{G}_1} = \lipnorm{\hat{G}_2}
  \leqs \varepsilon_{m,n}^{-1}
\]
and
\[
  \lipnorm{h_k h_{k'}} 
  \leqs \delta_{m,n}^{-1}.
\]
Combining the previous equations with 
\eqref{prop2:boundH:3}, we obtain the upper bound
\begin{equation}\label{prop2:bound:term2}
  \begin{split}
    \int H_1(T^{n+m}x,x) H_2(T^{n}x,x) \diff\mu(x) 
    \leq & M_{n+m}M_n + C\varepsilon_{m,n}^{\alpha_0} 
      + C\rho_{m,n}^{-3K}\delta_{m,n} ^{\alpha_0} \\
    &+ C\rho_{m,n}^{-2K}\varepsilon_{m,n}^{-2}\delta_{m,n}^{-1}
      (e^{-\tau n} + e^{-\tau m}).
  \end{split}
\end{equation}
Combining and simplifying equations \eqref{prop2:bound:term1} and
\eqref{prop2:bound:term2}, we obtain that 
\[
  \begin{split}
    &\int F(T^{n+m}x,T^{n}x,x) \diff\mu(x) - M_{n+m}M_n
    \leqs \varepsilon_{m,n}^{-1} \rho_{m,n} 
      + \varepsilon_{m,n}^{-1} \rho_{m,n}^{1-2K}
      \delta_{m,n}^{\alpha_0} \\
    &\qquad+ \rho_{m,n}^{-3K} \delta_{m,n}^{\alpha_0}
      + \varepsilon_{m,n}^{\alpha_0}
    + \rho_{m,n}^{-2K}\varepsilon_{m,n}^{-2}\delta_{m,n}^{-1}
      (e^{-\tau n} + e^{-\tau m}).
  \end{split}
\]
Now, let
\begin{align*}
  \varepsilon_{m,n} &= e^{-\gamma \min \{m,n\}}, \\
  \rho_{m,n} &= \varepsilon_{m,n}^2, \\
  \delta_{m,n} &= \varepsilon_{m,n}^{2 + 6K/\alpha_0},
\end{align*}
where
$\gamma
= \frac{1}{2}\bigl(4 + 4K + \frac{6K}{\alpha_0}\bigr)^{-1} \tau$.
Then $\varepsilon_{m,n}$ decays exponentially as $n \to \infty$ and
$\delta_{m,n} < \rho_{m,n}$ as required. For large enough $n$, we have
$3\varepsilon_{m,n}< r_n(x)$ and $\rho_{m,n} < \min \{\varepsilon_0,
\rho_0\}$ as we assumed. Finally,
\[
  \eta = \min \Bigl\{ \frac{\tau}{2}, \gamma, \alpha_0\gamma,
  (1 + 2\alpha_0 + 2K)\gamma \Bigr\}
\]
gives us the upper bound
\[
  \int F(T^{n+m}x,T^{n}x,x) \diff\mu(x) - M_{n+m}M_n
  \leqs e^{-\eta n} + e^{-\eta m}.
\]
\end{proof}

\begin{proposition}\label{prop:main3}
  With the above assumptions on $(X,\mu,T,d)$ and $(M_n)$, suppose
  additionally that $3$-fold correlations decay exponentially for
  Lipschitz continuous observables. There exists $C,\eta > 0$ such that
  for all $m,n \in \naturals$
  \[
    | \mu(E_n) - M_n | \leq C e^{-\eta n}
  \]
  and
  \[
    \mu(E_{n+m} \cap E_{n}) \leq \mu(E_{n+m})\mu(E_n)
    + C(e^{-\eta n} + e^{-\eta m}). 
  \]
\end{proposition}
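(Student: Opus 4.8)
The plan is to identify the indicator functions $\charfun_{E_n}$ and $\charfun_{E_{n+m}\cap E_n}$ with the integrands appearing in Propositions~\ref{prop:main1} and~\ref{prop:main2} and then read off the estimates. First observe that exponential decay of $3$-fold correlations for H\"older observables implies exponential decay of $2$-fold correlations: given H\"older $\varphi_0,\varphi_1$ and $0=n_0<n_1$, apply the $3$-fold hypothesis with $\varphi_2\equiv 1$ and $n_2=2n_1$; since $\int 1\diff\mu=1$ and $\norm{1}_\theta=1$, this yields the $2$-fold bound with the same $\tau$ and the same constant. Hence Proposition~\ref{prop:main1} is available under the hypotheses of the present proposition.

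For the first estimate, let $F$ be the function of Proposition~\ref{prop:main1} associated with $n$. By definition $x\in E_n$ exactly when $T^{n}x\in B(x,r_n(x))$, i.e., when $F(T^{n}x,x)=1$; therefore $\mu(E_n)=\int F(T^{n}x,x)\diff\mu(x)$, and \eqref{ineq:main1} gives $|\mu(E_n)-M_n|\le Ce^{-\eta n}$. For the second estimate, let $F$ now be the function of Proposition~\ref{prop:main2} associated with the pair $(n,m)$. Then $x\in E_{n+m}\cap E_n$ precisely when $T^{n+m}x\in B(x,r_{n+m}(x))$ and $T^{n}x\in B(x,r_n(x))$, i.e., when $F(T^{n+m}x,T^{n}x,x)=1$, so $\mu(E_{n+m}\cap E_n)=\int F(T^{n+m}x,T^{n}x,x)\diff\mu(x)$. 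Proposition~\ref{prop:main2} (in the form obtained at the end of its proof) thus gives
\[
  \mu(E_{n+m}\cap E_n)\le M_{n+m}M_n+C(e^{-\eta n}+e^{-\eta m}).
\]
Applying the first estimate to both factors and using $0\le\mu(E_n),\mu(E_{n+m})\le 1$ together with $e^{-\eta(n+m)}\le e^{-\eta n}$,
\[
  M_{n+m}M_n\le(\mu(E_{n+m})+Ce^{-\eta n})(\mu(E_n)+Ce^{-\eta n})\le\mu(E_{n+m})\mu(E_n)+C'e^{-\eta n},
\]
which combined with the previous display yields the asserted correlation bound, after taking $\eta$ to be the minimum of the rates produced by Propositions~\ref{prop:main1} and~\ref{prop:main2} and enlarging $C$ accordingly.

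There is no real obstacle here: the substantive work is done in Propositions~\ref{prop:main1} and~\ref{prop:main2}, of which this proposition is essentially a corollary. The only points demanding a little care are the reduction from $3$-fold to $2$-fold decay of correlations so that Proposition~\ref{prop:main1} applies, the bookkeeping identification of $\charfun_{E_n}$ and $\charfun_{E_{n+m}\cap E_n}$ with the respective integrands $F(T^{n}\cdot,\cdot)$ and $F(T^{n+m}\cdot,T^{n}\cdot,\cdot)$, and the passage from $M_{n+m}M_n$ to $\mu(E_{n+m})\mu(E_n)$, which costs only an exponentially small error because all the measures involved are at most $1$.
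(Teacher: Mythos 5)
Your proof is correct and follows the same route as the paper: identify $\mu(E_n)$ and $\mu(E_{n+m}\cap E_n)$ with the integrals $\int F(T^{n}x,x)\diff\mu(x)$ and $\int F(T^{n+m}x,T^{n}x,x)\diff\mu(x)$ from Propositions~\ref{prop:main1} and~\ref{prop:main2}, then combine the two estimates. The extra details you supply (the reduction from $3$-fold to $2$-fold decay by taking $\varphi_2\equiv 1$, and the exponentially small cost of replacing $M_{n+m}M_n$ by $\mu(E_{n+m})\mu(E_n)$) are exactly the steps the paper leaves implicit, and they are handled correctly.
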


\begin{proof}
  The first inequality follows directly from
  Proposition~\ref{prop:main1} by noticing that
  $\mu(E_n) = \int F(T^{n}x,x)\diff\mu(x)$ and the second inequality
  follows from the first in combination with
  Proposition~\ref{prop:main2} by noticing that 
  $\mu(E_{n+m} \cap E_{n}) = \int F(T^{n+m}x,T^{n}x,x)\diff\mu(x)$.
\end{proof}

\section{Proof of Theorems~\ref{thm:main} and \ref{thm:general}}
\label{sec:thm:proof}
We can now prove our main results.

\begin{proof}[Proof of Theorem~\ref{thm:general}]
  By Proposition~\ref{prop:main3}, we see that the conditions of
  Theorem~\ref{thm:persson} are met and we conclude the proof.
\end{proof}

Recall that $M$ is a compact smooth $N$-dimensional manifold, and that
$d$ and $\vol$ are the induced metric and volume measure. The
injectivity radius of $M$ is denoted by $\inj_M$ and is positive.

\begin{proof}[Proof of Theorem~\ref{thm:main}]
  By \cite[Proposition~3.1]{kotani2001Pressure}, the system has
  $3$-fold decay of correlations for Lipschitz continuous observables.
  Thus, the theorem follows from Theorem~\ref{thm:general} once we
  verify that Assumptions~\ref{assumption:annulus} and
  \ref{assumption:packing} hold.

  For Assumption~\ref{assumption:packing}, let 
  $\varepsilon \in (0,\inj_M)$ and consider a maximal
  $\varepsilon$-packing $\{B(x_k,\varepsilon)\}_{k=1}^{L}$ of $M$,
  i.e., $L = P_\varepsilon(M)$. Now,
  \[
    \begin{split}
      L \min_{k} \vol( B(x_k,\varepsilon) )
      &\leq \sum_{k=1}^{L} \vol( B(x_k,\varepsilon) ) \\
      &= \vol\Bigl( \bigcup_{k=1}^{L} B(x_k,\varepsilon) \Bigr) \\
      &\leq \vol(M).
    \end{split}
  \]
  By \cite[Proposition~14]{croke1980Isoperimetric}, there exists
  $C_1 >0$ such that
  \begin{equation}\label{eq:croke}
    \vol(B(y,\varepsilon)) \geq C_1 \varepsilon^{N}
  \end{equation}
  for all $y\in M$. Hence,
  \[
    L \leq \frac{\vol(M)}{\min_{k}\vol( B(x_k,\varepsilon) )}
    \leqs \varepsilon^{-N}.
  \]
  Thus, Assumption~\ref{assumption:packing} holds for $K = N$ and
  $\varepsilon_0 = \inj_M$.
  
  For Assumption~\ref{assumption:annulus}, let $x\in M$ and
  $0 < \varepsilon < \rho \leq \inj_M$. For convenience, let
  $\varepsilon' = \frac{\varepsilon}{2}$. Consider a maximal
  $\varepsilon'$-packing $\{B(x_k,\varepsilon')\}_{k=1}^{L}$ of
  the annulus $\{y : \rho \leq d(x,y) < \rho + \varepsilon\}$.
  Then $\{B(x_k,\varepsilon')\}_{k=1}^{L}$ is contained in the set
  $\{y : \rho - \varepsilon' \leq d(x,y) < \rho + 3\varepsilon'\}$.
  Thus,
  \[
    \begin{split}
      L\min_k \vol(B(x_k, \varepsilon'))
      &\leq \sum_{i=1}^{n} \vol(B(x_k, \varepsilon')) \\
      &= \vol \Bigl( \bigcup_{i=1}^{n} B(x_k, \varepsilon') \Bigr)  \\
      &\leq \vol \{y \in M : \rho - \varepsilon' 
        \leq d(x,y) < \rho + 3\varepsilon'\} \\
      &= \vol(B(x, \rho + 3\varepsilon'))
        - \vol(B(x, \rho - \varepsilon'))
    \end{split}
  \]
  Since $M$ is compact, there exists $C_2 > 0$ such that for all
  $0 < s < r$ and $y \in M$
  \[
    \vol ( B(y, r) ) - \vol ( B(y, s) )
    \leq C_2(r-s)
  \]
  (see for instance \cite[p127]{chavel2006Riemannian}).
  Therefore,
  \[
    L\min_k \vol(B(x_k, \varepsilon')) \leqs 4\varepsilon'
    \leqs \varepsilon.
  \]
  Using \eqref{eq:croke} again and noting that
  $\varepsilon' < \varepsilon < \inj_M$, we obtain that
  \[
    L \leqs \frac{\varepsilon}{\varepsilon^{N}} = \varepsilon^{1-N}.
  \]
  Now, $\{B(x_k,\varepsilon)\}_{k=1}^{L}$ covers
  $\{y \in M : \rho \leq d(x,y) < \rho + \varepsilon\}$. Hence,
  \[
    \begin{split}
      \mu \{y \in M : \rho \leq d(x,y) < \rho + \varepsilon\}
      &\leq \mu\Bigl(\bigcup_{k=1}^{L}B(x_k,\varepsilon)\Bigr) \\
      &\leq L\max_{k} \mu(B(x_k,\varepsilon)) \\
      &\leqs \varepsilon^{1-N} \varepsilon^{s} \\
      &= \varepsilon^{1+s-N},
    \end{split}
  \]
  where in the third inequality we have used that
  $\mu(B(x_k,\varepsilon)) \leqs \varepsilon^{s}$.
  Thus, Assumption~\ref{assumption:annulus} holds for
  $\rho_0 = \inj_M$ and $\alpha_0 = 1 + s - N$. Note that
  $\alpha_0 > 0$ as we assumed that $s > N - 1$.
\end{proof}

\section{Final Remarks}\label{sec:finalremarks}

\subsection{}\label{final:background}
By restricting the underlying system, one can improve Boshernitzan's
general result.
Pawelec~\cite{pawelec2017Iterated} proved that
\[
  \liminf_{n \to \infty} (n \log \log n)^{1/\beta} d(T^{n}x,x) = 0
\]
for $\mu$-a.e.\ $x$ under the assumptions of exponential decay of
correlations and some assumptions on $\mu$ related to $\beta$.
For self-similar sets, Chang, Wu and Wu~\cite{chang2019Quantitative}
and Baker and Farmer~\cite{baker2021Quantitative} obtained a dichotomy
result for the set
\[
  R = R(T, (r_n)) = \{x \in X : d(T^{n}x,x) < r_n 
    \text{ for infinitely many $n \in \naturals$}\}
\]
to have zero or full measure depending on the convergence or divergence
of a series involving $(r_n)$.
For mixing systems on $[0,1]$, Kirsebom, Kunde and
Persson~\cite{kirsebom2021Shrinking} obtained several results, with
varying conditions on $(r_n)$, on the measure of $R$ and related sets.
For integer matrices on the $N$-dimensional torus such that no
eigenvalue is a root of unity, they obtain a condition for $R$ to have
zero or full measure, again depending on the convergence of a series
involving $(r_n)$.
He and Liao~\cite{he2023Quantitative} extend Kirsebom, Kunde and
Persson's results to non-integer matrices and to targets that are
hyperrectangles or hyperboloids instead of open balls.
Allen, Baker and B\'ar\'any~\cite{allen2022Recurrence} give sufficient
conditions for the set $R$, adapted to subshifts of finite type, to
have measure $1$ and sufficient conditions for the set to have measure
$0$. They also obtain a critical threshold for which the measure of $R$
transitions from $0$ to $1$.

\subsection{}\label{final:applications}
If $\mu$ is absolutely continuous with respect to the Lebesgue measure
$\lambda$ with density in $L^{p}$, then $\mu$ satisfies
Assumptions~\ref{assumption:ball} and \ref{assumption:annulus}. Namely,
if $\diff\mu = h \diff\lambda$, then
\[
  \mu(A) = \int_{A} h \diff\lambda
  \leq \norm{h}_{L^{p}} \lambda(A)^{1/q},
\]
where $q$ is the harmonic conjugate of $p$.
Since
$\lambda(B(x,\rho + \varepsilon) \setminus B(x,\rho) )
\leq \varepsilon^{\alpha_0}$ 
and
$\lambda(B(x,\rho)) \leq \rho^{s}$
for some $\alpha_0$ and $s$,
we see that $\mu$ also satisfies the assumptions.
Gupta, Holland and Nicol \cite{gupta2011Extreme} consider planar
dispersing billiards. These systems have an absolutely continuous
invariant measure and so Assumptions~\ref{assumption:ball} and
\ref{assumption:annulus} are satisfied. Furthermore, some of the
systems have exponential decay of $2$-fold correlations for H\"older
observables as shown by Young \cite{young1998Statistical} and
exponential decay of multiple decorrelation as shown in
\cite[Theorem~7.41]{chernov2006Chaotic}. Thus our results hold for
such systems.
The authors in \cite{gupta2011Extreme} also consider Lozi maps, for
which a broad class satisfies Assumptions~\ref{assumption:annulus} and
have exponential decay of $2$-fold correlations for H\"older
observables.
For our result to apply to these systems, one would have to show that
$3$-fold correlations decay exponentially as well.

\subsection{}\label{final:improvements}
The main limitation of the method of proof is the reliance on
Assumption~\ref{assumption:annulus}, namely that we require control
over the measure of $\delta$-neighbourhoods of sets.
It is reasonable to think that the result should hold for measures that
do not satisfy this assumption; however, in that case, instead of
considering open balls, one would need to consider other shrinking
neighbourhoods. The reason for this is that for a general measure $\mu$
there exists $M_n \in [0,1]$ and $x\in X$ such that there is no open
ball $B_n(x)$ with $\mu(B_n(x)) = M_n$.
Furthermore, one would like to improve the condition
$M_n \geq n^{-1} (\log n)^{4+\varepsilon}$ in
Assumption~\ref{assumption:seq} to one closer to the critical decay
rate $M_n \geq n^{-1}$.

\subsection*{Acknowledgements}
I wish to thank my supervisor, Tomas Persson, for helpful discussions
and for his guidance in preparing this paper. I am grateful to Viviane
Baladi for the reference \cite{pene2004Multiple} and to Nicholas
Fleming-V\'azquez for the reference
\cite[Theorem~7.41]{chernov2006Chaotic}.
I am also grateful to the referee for their comments, which helped
improve the presentation of the paper.


\end{document}